\documentclass[12pt,oneside,reqno]{amsart}
\usepackage[margin=3cm]{geometry}

\usepackage{graphicx}
\graphicspath{{PW_CPR_Images/}{./}} 
\usepackage{float}
\usepackage{subfigure}
\usepackage[colorlinks=true, allcolors=blue]{hyperref}
\usepackage{multirow}%
\usepackage{amsmath,amssymb,amsfonts}%
\usepackage{amsthm}%
\usepackage{mathrsfs}%
\usepackage[title]{appendix}%
\usepackage{xcolor}%
\usepackage{textcomp}%
\usepackage{manyfoot}%
\usepackage{booktabs}%
\usepackage{algorithm}%
\usepackage{algorithmicx}%
\usepackage{algpseudocode}
\usepackage{listings}%
\usepackage{blindtext}
\usepackage{hyperref}
\usepackage{enumerate}
\usepackage{tikz}
\usetikzlibrary{shapes,snakes}

\usepackage{placeins} 
\usepackage{caption} 

\newtheorem{theorem}{Theorem}[section]

\newtheorem{lemma}[theorem]{Lemma}%
\newtheorem{example}[theorem]{Example}%
\newtheorem{remark}[theorem]{Remark}%
\newtheorem{proposition}[theorem]{Proposition}%

\newcommand{\Z}{\mathbb{Z}}
\newcommand{\R}{\mathbb{R}}
\newcommand{\C}{\mathbb{C}}

\numberwithin{equation}{section}

\title[]{Conjugate phase retrieval on graphs and with applications in shift-invariant spaces}
\author{Cheng Cheng, Baixiang Wu, Jun Xian}

\address{C. Cheng:
	School of Mathematics, Sun Yat-sen University, Guangzhou, Guangdong,
	510275, China, email: chengch66@mail.sysu.edu.cn}

\address{B. Wu:
	School of Mathematics, Sun Yat-sen University, Guangzhou, Guangdong,
	510275, China, email: wubx29@mail2.sysu.edu.cn}

\address{J. Xian: School of Mathematics and Guangdong Province Key Laboratory of Computational Science, Sun Yat-sen University, Guangzhou, Guangdong, 510275, China, email: xianjun@mail.sysu.edu.cn}
	
	\thanks{This project is partially supported by  National Key RD Program of China (No. 2024YFA1013703),  National Natural Science Foundation of China (12171490, 12371104), China Scholarship Council (202306380270, 202506380227), Fundamental Research Funds for the Central Universities, Sun Yat-sen University (24lgqb019-2), and the Guangdong Province Key Laboratory of Computational Science, China.}
\date{}
\begin{document}
	\maketitle
	
	\begin{abstract}
		In this paper, we study the conjugate phase retrieval for complex-valued \mbox{signals} residing on graphs, and explore its applications to shift-invariant spaces. Given a complex-valued graph signal $\bf f$ residing  on the graph $\mathcal G$, we introduce a graph ${\mathcal G}_{\bf f}$ and show that its connectivity is sufficient to determine 
		$\bf f$ up to a global unimodular constant and conjugation. We then construct two explicit graph models and show that graph signals residing on them can be recovered, up to a unimodular constant and conjugation, from its absolute values on the vertices and the relative magnitudes between neighboring vertices. 
	Building on this graph-based framework, we apply our results to shift-invariant spaces generated by real-valued functions. For signals in the Paley-Wiener space, we show that any complex-valued function can be recovered, up to a unimodular constant and conjugation, from structured phaseless samples taken at three times the Nyquist rate. For more general shift-invariant spaces, we establish the conjugate phase retrievability of signals from phaseless samples collected on a discrete sampling set, in conjunction with relative magnitude measurements between neighboring sample points. Two numerical reconstruction algorithms are  introduced to recover the signals in the Paley-Wiener space and general shift-invariant spaces, up to a unimodular constant and conjugation, from the given phaseless measurements.
	\end{abstract}

	\section{Introduction}
	
Phase retrieval has been studied in  various fields, including X-ray crystallography \cite{harrison1993phase,millane1990phase}, diffraction imaging \cite{bunk2007diffractive}, optics \cite{walther1963question}, and astronomical imaging \cite{dainty1987phase}.  It considers the recovery of a signal from the magnitudes of its linear measurements, up to a trivial ambiguity.  
	 In recent years, phase retrieval has seen significant advances, see \cite{alaifari2021uniqueness,balan2009painless,balan2006signal,bandeira2014saving,cahill2016phase,candes2015phase,candes2013phaselift,chen2022phase,chen2020phase,cheng2021stable,grochenig2020phase,jaming2014uniqueness,pohl2014phaseless,romero2021sign,thakur2011reconstruction} and references therein. 
	The shift-invariant space has been extensively studied in wavelet analysis and approximation theory \cite{aldroubi2000beurling, aldroubi2001nonuniform, aldroubi2005convolution,boor1994structure}. The phase retrieval problem in the shift-invariant space is an infinite-dimensional phaseless sampling and reconstruction problem. It considers the recovery of the signal from its phaseless samples taken on the domain or on a discrete sampling set \cite{chen2020phase, cheng2019phaseless,cheng2021stable2,grochenig2020phase,romero2021sign,shenoy2016exact,thakur2011reconstruction}. 
	Let $V^p(\phi)$ be the shift-invariant space with the real-valued generator $\phi$, 
	\begin{align}\label{Vspace.def}
		V^p(\phi) = \{\sum_{n\in \Z} c_n \phi(\cdot - n): c=(c_n)_{n\in \Z} \in \ell^p(\Z), 1\le p\le \infty  \}.
	\end{align}
	For a signal $f\in V^p(\phi)$, $e^{i\alpha_1}f$  or $e^{i\alpha_2}\overline{f}$
	can not be distinguished from the magnitudes of their pointwise evaluations for any $\alpha_1, \alpha_2 \in \R$ \cite{chen2024conjugate,chen2022phase,grochenig2020phase, lai2021conjugate,walther1963question}. 	
	The notion of complex conjugate phase retrieval in the complex conjugate invariant space $\mathcal{C}$ ($\overline{f} \in \mathcal{C}$ if $f\in \mathcal{C}$) has been introduced \cite{chen2024conjugate, chen2022phase,  lai2021conjugate}.  However, knowing only the  phaseless pointwise evaluations may not be sufficient to determine the signal, even up to a unimodular constant and conjugation. For example, let $\phi=\max(1-|x-1|,0)$, write $f=\sum_{k\in \Z} c_k\phi(\cdot-k)$ and $g=\sum_{k\in \Z} d_k\phi(\cdot-k)$, where the coefficients are
	\begin{align*}
		(c_0,c_1,c_2)=(1,i,1), \text{ and } c_k=0 \text{ for all } k\notin \{0,1,2\}
	\end{align*}
	and
	\begin{align*}
		(d_0,d_1,d_2)=(-1,i,1), \text{ and } d_k=0 \text{ for all } k\notin \{0,1,2\},  
	\end{align*}
	respectively. For any $x=t+k\in \R$   for some $t\in [0,1)$ and $k\in \Z$, we have
	\begin{align*}
		f(x)=tc_k+(1-t)c_{k-1} \text{ and } g(x)=td_k+(1-t)d_{k-1}.
	\end{align*}
	One may easily verify that
	\begin{align*}
		|f(x)|^2=|g(x)|^2, x\in \R.
	\end{align*}
	The authors considered the  (conjugate) phase retrieval of complex-valued  signals through structured phaseless measurements \cite{chen2024conjugate,chen2023conjugate, jaming2014uniqueness,lai2021conjugate,mc2004phase,pohl2014phaseless}. 
	In \cite{mc2004phase}, McDonald  proved that any complex-valued bandlimited signals can be determined, up to a unimodular constant and conjugation, from $|f(x)|$, $|f(x+c)-f(x)|, x\in \R$ for some $c>0$, on the whole real line,  which is a form of  phaseless structured convolutions.  The authors in \cite{evans2020conjugate} studied the conjugate phase retrieval  in finite-dimensional complex vector spaces using real frame. In \cite{lai2021conjugate}, the authors also considered the conjugate phase retrieval of complex-valued bandlimited signals from  phaseless structured convolutions and  proposed several algorithms. In  \cite{chen2023conjugate}, they established the  conjugate phase retrieval for functions living in the shift-invariant space generated by the  B-splines.   Later, one of the authors in this paper  showed that complex-valued signals in a Gaussian shift-invariant space can be determined, up to a unimodular   constant and conjugation, from phaseless Hermite samples \cite{chen2024conjugate}.

	Since the mapping from $\C$ to $\R^2$ is an isomorphism, conjugate phase retrieval for complex-valued signals can be regarded as a determination of the $\R^2$-valued function $(\Re f, \Im f)^{T}$, up to a $2\times 2$ orthogonal matrix. In \cite{chen2022phase}, the authors provide several characterizations of phase retrieval of  finite-dimensional vector fields on a graph from their absolute magnitudes at vertices and relative magnitudes between neighboring vertices. In this paper,  we study the conjugate phase retrieval of complex-valued graph signals. 
	Give a graph $\mathcal{G}=(V,E)$, we  introduce the graph $\mathcal{G}_{\bf f}$ associated with the complex-valued graph signal ${\bf f}:=(f_n)_{n\in V}$ which is indexed on the graph, and then use the connectivity of the graph $\mathcal{G}_{\bf f}$ to characterize the conjugate phase retrieval of the graph signal $\bf f$, see Theorem \ref{connect.pr.suff.thm1}. 
	Later, we construct two explicit graphs and show that the graph signal  $\bf f$ can be determined, up to a unimodular constant and conjugation, from absolute magnitudes on the vertices and relative magnitudes between the neighboring vertices of some designed graphs, see Theorems \ref{thm.reference} and \ref{PW.adj.thm1}.

	The connectivity of the graph ${\mathcal G}_{\bf f}$ is sufficient for the conjugate phase retrieval  of the graph signal $\bf f$. If  the graph signal $\bf f$ is composed by  the values of a function $f$ taken on the discrete set $X$. Then the conjugate phase retrieval for graph signal $\bf f$ will be the conjugate phase retrieval of the function residing in some function spaces. We then further explore this in Sections \ref{cpr.pw.sec} and \ref{cpr.sis.sec} for functions residing in the  Paley-Wiener spaces and the shift-invariant spaces. 
	For the Paley-Wiener space
	\begin{align}\label{PW.def}
		PW_B^{p}:= \{ f\in L^p(\R): \operatorname{supp} \ (\mathcal{F}f)\subset [-B,B]\}, 
	\end{align}
    where $p\in \{1,2\}$,
	we show that  $f\in PW_B^2$ can be determined, up to a  unimodular constant and conjugation, from the structured phaseless samples at three times the Nyqusit rate with an explicit reconstruction procedure, see Theorems  \ref{pw.2pts.thm1} and \ref{PW.adj.thm2}.


	A set $X:=\{x_n\}_{n\in \Z} \subset \R$ is said to be separated if
	\begin{align*}
		\inf_{n\ne m} |x_n-x_m|>0.
	\end{align*}
	A separated set $X:=\{x_n\}_{n\in \Z} \subset \R$ is called a (stable) set of sampling for $V^p(\phi)$  in \eqref{Vspace.def} if there exist constants $0<A\le B<\infty$ such that
	\begin{align}\label{sampling.def.eq1}
		A\|f\|_{L^p(\R)}^p\le \sum_{n\in \Z} |f(x_n)|^p \le B\|f\|_{L^p(\R)}^p, \ f\in V^p(\phi), 
	\end{align}
	where $1\le p\le \infty$. 
	A set $X$ is said to be a uniqueness set for $V^p(\phi)$ if $f\in V^p(\phi)$ and $f(x)= 0$ for all $x\in X$ implies that $f\equiv 0$. 
    There are  extensive studies that have demonstrated that every set of sampling for $V^p(\phi)$ satisfying \eqref{sampling.def.eq1} is a  uniqueness set and have considered the construction of the set of sampling for $V^p(\phi)$ satisfying \eqref{sampling.def.eq1}, see  \cite{aldroubi2000beurling, aldroubi2001nonuniform,aldroubi2005convolution,benedetto1992irregular, benedetto2001modern, grochenig2001foundations, grochenig2018sampling,higgins1996sampling,unser2000sampling,young2001introduction} and references therein.  
  In Section \ref{cpr.sis.sec}, we study the conjugate phase retrieval problem for complex-valued shift-invariant signals based on structured phaseless samples taken on a set of sampling.  In \cite{chen2024conjugate,chen2023conjugate,lai2021conjugate, mc2004phase}, they considered the conjugate phase retrieval from Hermite phaseless samples or structured convolutions  phaseless samples, and the proofs  utilize properties of  entire function of finite type. In this paper,  we derive theoretical characterizations for the conjugate phase retrieval of signals in the space $V^p(\phi)$ generated by the real-valued continuous function $\phi$,  see Theorems \ref{complete.cpr.thm1} and \ref{cpr.adj.thm}.   A key advantage of our framework is that, unlike many other approaches, our results do not rely on the analytic properties of entire functions, offering a more general solution.
   We show that when  the graph signal $\bf f$
 is formed by sampling a function  $f\in V^p(\phi)$ 
 on a set $X$ satisfying \eqref{sampling.def.eq1}, conjugate phase retrieval becomes feasible using absolute magnitudes assigned to graph vertices and relative magnitudes assigned to graph edges, as described in Theorems \ref{thm.reference} and \ref{PW.adj.thm1}. 
 Moreover, we not only establish the theoretical foundation for conjugate phase retrieval in shift-invariant spaces but also propose a practical reconstruction algorithm, see Algorithm \ref{alg2}. These ideas are further extended to the Paley-Wiener space, where we construct a corresponding reconstruction algorithm using similar arguments, see Algorithm \ref{alg1}.

	The short-time Fourier transform (STFT) simultaneously considers both the time and frequency domains, playing a crucial role in the time-frequency analysis \cite{alaifari2021uniqueness,grochenig2001foundations}. In \cite{alaifari2021uniqueness}, the authors showed that real-valued bandlimited signals can be determined, up to a sign, from phaseless STFT samples only in the time domain with a sampling rate twice the Nyquist rate. They further demonstrated that complex-valued bandlimited  signals can be uniquely determined, up to a unimodular constant, from STFT magnitudes,  under certain conditions on the  ambiguity function of the window function.  
	However, when reconstructing a complex-valued bandlimited function from phaseless STFT measurements taken only in the time domain with a real-valued window function, one cannot distinguish between the function  and its conjugation.  
In Section \ref{stft.pw.sec}, we consider the  conjugate phase retrieval of  bandlimited funtions from phaseless STFT measurements taken in the time domain only, assuming the window function $\psi$ is real-valued. We propose two phaseless STFT sampling schemes that enable conjugate phase retrieval under the condition that the Fourier transform of the window function  is nonzero almost everywhere, see Theorems \ref{thm4} and \ref{thm6}.

	\section{Conjugate phase retrieval of complex-valued signals residing on the graphs}\label{cpr.graph.sec}
	
	Phase retrieval for vector fields on graphs has been discussed in \cite{chen2022phase}. The authors there proposed a new paradigm of the phase retrieval, that is the determination of a vector field residing on the graph  $\mathcal{G}=(V, E)$, up to an orthogonal matrix, where $V$ is the vertex set and $E$ is the edge set. They showed that phase retrieval is possible for the vector field $\mathbf{f}_n, n\in V$ residing on the graph $\mathcal{G}=(V, E)$ from the absolute magnitudes $\|\mathbf{f}_n\|_2, n\in V$ and relative magnitudes $\|\mathbf{f}_n-\mathbf{f}_m\|_2, (n, m)\in E$, if the graph satisfies some conditions. For example, if  $\mathcal{G}_C=(V_C, E_C)$ is a complete graph, then phase retrieval is possible for the vector field $\mathbf{f}_n, n\in V_C$ from the the absolute magnitudes $\|\mathbf{f}_n\|_2, n\in V_C$ and relative magnitudes $\|\mathbf{f}_n-\mathbf{f}_m\|_2, (n, m)\in E_C$, see the following Proposition.  
	
	\smallskip
	\begin{proposition}\label{complete.prop.jfa}\cite[Theorem 4.1]{chen2022phase}
		Let $\mathcal{G}_C=(V_C, E_C)$ be a complete graph, and $\mathbf{f}=(\mathbf{f}_n)_{n\in V_C}, \mathbf{f}_n\in \R^d$ be a vector field on the vertex $V_C$. Then $\mathbf{f}$  can be determined, up to a $d\times d$ orthogonal matrix, from the absolute magnitudes $\|\mathbf{f}_n\|_2, n\in V_C$ and relative magnitudes $\|\mathbf{f}_n-\mathbf{f}_m\|_2, (n, m)\in E_C$.
	\end{proposition}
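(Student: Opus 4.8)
The plan is to reduce the reconstruction of $\mathbf{f}$, up to an orthogonal matrix, to the reconstruction of the Gram matrix $G=\big(\langle \mathbf{f}_n,\mathbf{f}_m\rangle\big)_{n,m\in V_C}$, and then to invoke the classical fact that a finite family of vectors in $\R^d$ is determined by its Gram matrix up to an element of the orthogonal group $O(d)$.

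First I would recover $G$ from the given data. Since $\mathcal{G}_C$ is complete, every pair $(n,m)$ with $n\neq m$ lies in $E_C$, so the relative magnitude $\|\mathbf{f}_n-\mathbf{f}_m\|_2$ is available for all such pairs. The polarization identity
\[
\langle \mathbf{f}_n,\mathbf{f}_m\rangle=\tfrac{1}{2}\Big(\|\mathbf{f}_n\|_2^2+\|\mathbf{f}_m\|_2^2-\|\mathbf{f}_n-\mathbf{f}_m\|_2^2\Big)
\]
then expresses every off-diagonal entry of $G$ through the absolute magnitudes at the endpoints and the relative magnitude along the connecting edge, while the diagonal entries are simply $\|\mathbf{f}_n\|_2^2$. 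Hence $G$ is uniquely determined by the measurements.

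Next I would show that $G$ determines $\mathbf{f}$ up to a $d\times d$ orthogonal matrix. Let $\mathbf{g}=(\mathbf{g}_n)_{n\in V_C}$ with $\mathbf{g}_n\in\R^d$ have the same Gram matrix, i.e. $\langle\mathbf{g}_n,\mathbf{g}_m\rangle=\langle\mathbf{f}_n,\mathbf{f}_m\rangle$ for all $n,m\in V_C$, and set $U=\operatorname{span}\{\mathbf{f}_n:n\in V_C\}$, $W=\operatorname{span}\{\mathbf{g}_n:n\in V_C\}$. Define $Q_0\colon U\to W$ by $Q_0\big(\sum_n a_n\mathbf{f}_n\big)=\sum_n a_n\mathbf{g}_n$. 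This is well defined and linear: if $\sum_n a_n\mathbf{f}_n=0$, then $\sum_n a_n\langle\mathbf{f}_n,\mathbf{f}_m\rangle=0$ for each $m$, hence $\sum_n a_n\langle\mathbf{g}_n,\mathbf{g}_m\rangle=0$ for each $m$, so $\sum_n a_n\mathbf{g}_n\perp W$ and therefore vanishes. By symmetry $Q_0$ is bijective, and expanding $\langle Q_0 u,Q_0 u'\rangle$ in the $\mathbf{g}_n$ and using equality of the Gram matrices gives $\langle Q_0 u,Q_0 u'\rangle=\langle u,u'\rangle$ for all $u,u'\in U$; in particular $\dim U=\dim W$. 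Finally I would extend $Q_0$ to $Q\in O(d)$ by sending an orthonormal basis of $U^{\perp}$ to one of $W^{\perp}$, so that $\mathbf{g}_n=Q\mathbf{f}_n$ for every $n\in V_C$, which is the assertion.

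The only point requiring care is the well-definedness and isometry of $Q_0$: one must check that equality of the full Gram matrices transfers linear dependence relations and inner products from the $\mathbf{f}_n$ to the $\mathbf{g}_n$. Once this is in place, the extension step is automatic, since any linear isometry between two subspaces of $\R^d$ of equal dimension extends to an orthogonal transformation of $\R^d$. No further obstacle arises; completeness of $\mathcal{G}_C$ enters only to ensure that all pairwise relative magnitudes, and thus all off-diagonal entries of $G$, are observed.
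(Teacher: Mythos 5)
Your argument is correct and complete: polarization on the complete graph recovers the full Gram matrix $\bigl(\langle \mathbf{f}_n,\mathbf{f}_m\rangle\bigr)_{n,m\in V_C}$, and your construction of the isometry $Q_0$ on $\operatorname{span}\{\mathbf{f}_n\}$ together with its extension to an element of $O(d)$ is the standard rigidity argument showing that a Gram matrix determines a finite family of vectors in $\R^d$ up to an orthogonal transformation. The paper itself offers no proof of this proposition --- it is quoted from \cite[Theorem 4.1]{chen2022phase} --- and your route is essentially the one underlying that reference, so there is nothing to flag.
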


	\smallskip
	
		Let ${\mathcal G}_C=(V_C, E_C)$ be a complete graph with vertex set $V_C$ and edge set $E_C$, and ${\bf f}=({\bf f}_n)_{n\in V_C}\in \R^{2\times V_C}$ be a vector field residing on the graph ${\mathcal G}_C$.  Combine with  Proposition \ref{complete.prop.jfa}, one may easily verify that  the vector field $\bf f$ can be determined, up to a $2\times 2$ orthogonal matrix, from the absolute magnitudes taken on the vertices $\|{\bf f}_n\|_2, n\in V_C$ and relative magnitudes between the neighbors $\|{\bf f}_n - {\bf f}_m\|_2, (n,m) \in E_C$.

	Define the  isomorphism 
	\begin{align*}
		M:\ {\mathcal C}\ni f\longmapsto \begin{pmatrix} \Re f \\ \Im f
		\end{pmatrix}\in  \Re({\mathcal C})\times \Re({\mathcal C})
	\end{align*}
	between  complex-valued function in the  complex  conjugate invariant space $\mathcal C$ 
	and ${\mathbb R}^2$-valued function, where $\Re ({\mathcal C})$ is the space of real-valued function  in ${\mathcal C}$.   As a $2\times 2$ orthogonal matrix operating on the  vector-valued function  $\begin{pmatrix} \Re f \\ \Im f
	\end{pmatrix}$ corresponds to the conjugation or rotation  of the complex-valued function $f$, 
	the  conjugate phase retrieval problem for a complex-valued function    is 
	the determination of  a ${\mathbb R}^2$-valued function, up to a $2\times 2$ orthogonal matrix, from
	the structured phaseless samples taken on the graph. 

	\smallskip
	
	Now, we will study the determination of a complex-valued graph signal, up to a unimodular and conjugation, from its absolute magnitudes and relative magnitudes residing on  an {\em arbitrary} simple graph. 
	Let  ${\mathcal G}=(V, E)$ be a simple graph, and  ${\bf f}=(f_n)_{n\in V}, f_n\in \C$ be a complex-valued  signal residing on the graph. 	We say an edge $(n,m)\in E$ is  non-collinear if 
	\begin{equation}\label{noncollinear.def}
		{f}_n \overline{{f}_m} \ne  {f}_m \overline{f}_n. \end{equation}
	Equivalently, signals $f_n, f_m$  do not lie on the same line passing through the origin in the complex plane.
	 Denote  a complete subgraph of order $3$ by 
	${\mathcal G}_\triangle=(V_\triangle, E_\triangle)$.   Let $V_{\bf f}$ be the vertex set containing all complete subgraphs ${\mathcal G}_\triangle$ of order $3$
    in the graph $\mathcal G$  and  $E_{\bf f}$ be
	the edge set containing   all pairs  of
	complete subgraphs  ${\mathcal G}_{\triangle_n}=(V_{\triangle_n}, E_{\triangle_n}), {\mathcal G}_{\triangle_m}=(V_{\triangle_m}, E_{\triangle_m}) \in V_{\bf f}$ satisfying the  common edge is non-collinear, that is, 
	$$f_i\overline{f_j}\neq f_j \overline{f_i}, \ \  (i, j)\in E_{\triangle_n}\cap  E_{\triangle_m}. $$ 
	Define 
	\begin{equation}\label{gf.def}
		{\mathcal G}_{\bf f}=(V_{\bf f}, E_{\bf f})
	\end{equation}
	be the graph with vertex set $V_{\bf f}$ and edge set  $E_{\bf f}$. 
	One may easily verify that the graph  ${\mathcal G}_{\bf f}=(V_{\bf f}, E_{\bf f})$ is defined if ${\bf f}=(f_n)_{n\in V}$ is provided. In the next theorem, we show that if ${\mathcal G}_{\bf f}$ is connected, then the graph signal  ${\bf f}=(f_n)_{n\in V}$ residing on the graph $\mathcal G$ can be determined, up to a unimodular constant and conjugation,  from the absolute magnitudes $|f_n|, n\in V$ and relative magnitudes  $|f_n-f_m|, (n, m)\in E$.

	\begin{theorem}\label{connect.pr.suff.thm1}
		Let $\mathcal{G}=(V, E)$ be a simple graph and  $\mathbf{f}= (f_n)_{n\in V}$ be a complex-valued graph signal residing on the graph. If for any vertex  $n\in V$ there exists a complete subgraph $\mathcal{G}_{\triangle_n}=(V_{\triangle_n},E_{\triangle_n})$ of order $3$ containing it,  and $\mathcal{G}_{\bf f}=(V_{\bf f},E_{\bf f})$ in \eqref{gf.def} is connected, then the graph signal $\mathbf{f}$ can be determined, up to unimodular constant and conjugation, from absolute magnitudes $|f_n|, n\in V$ and relative magnitudes $|f_n-f_m|, (n, m)\in E$. 
	\end{theorem}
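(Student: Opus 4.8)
The plan is to convert the phaseless data into inner-product data, solve the problem on a single triangle, and then use connectivity of $\mathcal G_{\bf f}$ to transport one global choice of unimodular factor and conjugation across all of $\mathcal G$. First I would record that, by the polarization identity $|f_n-f_m|^2=|f_n|^2+|f_m|^2-2\operatorname{Re}(f_n\overline{f_m})$, knowing $|f_n|$ ($n\in V$) together with $|f_n-f_m|$ ($(n,m)\in E$) is equivalent to knowing $|f_n|$ ($n\in V$) together with $\operatorname{Re}(f_n\overline{f_m})$ ($(n,m)\in E$). So it suffices to show: if $\mathbf g=(g_n)_{n\in V}$ satisfies $|g_n|=|f_n|$ for all $n\in V$ and $\operatorname{Re}(g_n\overline{g_m})=\operatorname{Re}(f_n\overline{f_m})$ for all $(n,m)\in E$, then $\mathbf g=c\mathbf f$ or $\mathbf g=c\overline{\mathbf f}$ for some $c\in\C$ with $|c|=1$. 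Note that $\overline{\mathbf g}$ satisfies the same two families of identities.

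The computational core is a one-triangle statement: if $T=\{u,v,w\}$ spans a complete subgraph of order $3$ of $\mathcal G$ whose edge $(u,v)$ is non-collinear (so $f_u,f_v,g_u,g_v\neq0$ and, by \eqref{noncollinear.def}, $\operatorname{Im}(\overline{f_u}f_v)\neq0$), then $\mathbf g|_T\in\{c\,\mathbf f|_T:|c|=1\}\cup\{c\,\overline{\mathbf f|_T}:|c|=1\}$. To prove this I would normalize against $u$: set $h_n:=\overline{g_u}\,g_n$ and $p_n:=\overline{f_u}\,f_n$ for $n\in T$, so that $|h_n|=|p_n|$, $\operatorname{Re}(h_n\overline{h_m})=\operatorname{Re}(p_n\overline{p_m})$ on the three edges, and $h_u=|f_u|^2=p_u>0$. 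Comparing along $(u,v)$ gives $\operatorname{Re}h_v=\operatorname{Re}p_v$, hence $h_v=p_v$ or $h_v=\overline{p_v}$, these being distinct because $\operatorname{Im}p_v\neq0$; since passing from $\mathbf g$ to $\overline{\mathbf g}$ replaces each $h_n$ by $\overline{h_n}$, I may assume $h_v=p_v$. Then $(u,w)$ forces $h_w-p_w$ to be purely imaginary and $(v,w)$ forces $\operatorname{Re}\bigl((h_w-p_w)\overline{p_v}\bigr)=0$; since $\operatorname{Im}p_v\neq0$ these two conditions yield $h_w=p_w$. Thus $\overline{g_u}g_n=\overline{f_u}f_n$ for all $n\in T$, i.e.\ $\mathbf g|_T=c\,\mathbf f|_T$ with $c=\overline{f_u}/\overline{g_u}$, $|c|=1$; undoing the possible conjugation gives the claim. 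I would also note the easy complementary fact that a fully collinear triangle automatically satisfies $\mathbf g|_T=c\,\mathbf f|_T$ for some $|c|=1$. Consequently, writing $\sigma$ for complex conjugation and $\sigma^0=\mathrm{id}$, every triangle $T$ with a non-collinear edge determines a well-defined pair $(c_T,b_T)$, $|c_T|=1$, $b_T\in\{0,1\}$, with $\mathbf g|_T=c_T\,\sigma^{b_T}(\mathbf f|_T)$, since a non-collinear triangle admits no nontrivial unimodular-plus-conjugation symmetry.

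The last and, I expect, most delicate step is global consistency of these per-triangle data, and this is precisely what connectivity of $\mathcal G_{\bf f}$ (whose edges are exactly the shared non-collinear edges) provides. If $\mathcal G_{\bf f}$ has a single vertex, the hypothesis that every vertex of $\mathcal G$ lies in a triangle forces $V$ to equal that one triangle and the one-triangle step finishes the proof. Otherwise $\mathcal G_{\bf f}$ is connected with at least two vertices, so every triangle $T\in V_{\bf f}$ has degree at least one in $\mathcal G_{\bf f}$, hence contains a non-collinear edge, so $(c_T,b_T)$ is defined for all of them. For adjacent $T\sim T'$ in $\mathcal G_{\bf f}$ the shared edge $(u,v)$ is non-collinear, so $f_u\neq0$ and $\overline{f_u}f_v\notin\R$; equating $c_T\sigma^{b_T}(f_u)=c_{T'}\sigma^{b_{T'}}(f_u)$ with the analogous identity at $v$, a short case analysis on $(b_T,b_{T'})$ shows $b_T=b_{T'}$ (a mixed choice would force $f_u/f_v\in\R$, i.e.\ collinearity) and then $c_T=c_{T'}$. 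Fixing a reference triangle $T_0$ and propagating this equality along paths in the connected graph $\mathcal G_{\bf f}$ gives $(c_T,b_T)=(c_{T_0},b_{T_0})=:(c,b)$ for every $T\in V_{\bf f}$. Finally, since each vertex $n\in V$ lies in some triangle, $g_n=c\,\sigma^b(f_n)$ for all $n\in V$, i.e.\ $\mathbf g=c\mathbf f$ or $\mathbf g=c\overline{\mathbf f}$, which is the assertion. The remaining loose ends—bookkeeping at vertices with $f_n=0$ and checking that fully collinear triangles (which are isolated in $\mathcal G_{\bf f}$) cause no obstruction—are routine.
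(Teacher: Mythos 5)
Your proof is correct and follows essentially the same route as the paper's: polarization to turn the relative magnitudes into the real inner products $\Re(f_n\overline{f_m})$, determination of a triangle anchored at a non-collinear edge via the nondegenerate $2\times 2$ linear system, and propagation of the unimodular constant and conjugation choice along the connected graph $\mathcal{G}_{\bf f}$. The only difference is organizational: you carry a per-triangle pair $(c_T,b_T)$ and verify consistency across shared non-collinear edges, whereas the paper fixes one global normalization up front and runs the identical computation as an induction; your version also spells out the degenerate cases (a single, possibly fully collinear, triangle) that the paper leaves implicit.
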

	\begin{proof}
		Let $\mathbf{g}=({g}_n)_{n\in V}$ be a complex-valued vector indexed on the vertex set $V$ satisfying 
		\begin{align}\label{eq04}
			|{f}_n| = |{g}_n|
		\end{align}
		and
		\begin{align*}
			|{f}_n - {f}_m| = |{g}_n - {g}_m|
		\end{align*}
		for all $n\in V$ and $(n,m)\in E$.
		Then we have
		\begin{align}\label{eq54}
			\Re f_n \Re f_m + \Im f_n \Im f_m
			=\Re g_n \Re g_m + \Im g_n \Im g_m
		\end{align}
		for all $(n,m)\in E$. 
	Let $\mathcal{G}_{\triangle_p}=(V_{\triangle_p},E_{\triangle_p})$  be a  complete subgraph  of order 3, with vertex set $V_{\triangle_p}=\{n_p, m_p, k_p\}$ and  $(n_p, m_p)\in E_{\triangle_p}$ be the non-collinear edge satisfying \eqref{noncollinear.def}. Then 
		\begin{align}\label{eq05}
			\Re f_{n_p} \Im f_{m_p} \ne \Re f_{m_p} \Im f_{n_p},
		\end{align}
		which implies $f_{n_p}, f_{m_p}\ne 0$. Without loss of generality, assume $f_{n_p}=g_{n_p}>0$, otherwise replacing $\bf f$ by $ e^{i\alpha_1}\mathbf{f}$ and $\bf g$ by $ e^{i\alpha_2}\mathbf{g} $ respectively, where $\alpha_1, \alpha_2\in \R$. Combine with \eqref{eq54}, we have $\Re f_{m_p}= \Re g_{m_p}$ and then $\Im f_{m_p}= \tau_p \Im g_{m_p}\ne 0$ for some $\tau_p=\pm 1$ by \eqref{eq04} and \eqref{eq05}. Without loss of generality, assume $\tau_p= 1$.
 		As $\mathcal{G}_{\triangle_p}$ is a complete subgrapgh and by \eqref{eq54}, we then have
		\begin{equation*}
			\left\{ \begin{array}{ll}
				
				( \Re f_{k_p} - \Re g_{k_p} )  \Re f_{m_p}
				+ ( \Im f_{k_p} -  \Im g_{k_p} )  \Im f_{m_p} {}& = 0 \\
				( \Re f_{k_p} - \Re g_{k_p} )  \Re f_{n_p}
				+ ( \Im f_{k_p} -  \Im g_{k_p} )  \Im f_{n_p} {}& = 0. 
			\end{array}\right.
		\end{equation*}
		Together with \eqref{eq05}, we have $\Re f_{k_p} = \Re g_{k_p}$ and $\Im f_{k_p} =  \Im g_{k_p}$. Thus, we have
		\begin{align*}
			f_n = {g}_n, n\in V_{\triangle_p}.
		\end{align*} 
		It suffices to show that $f_n = {g}_n$ holds for all $n\in V$. 
		As for any $n\in V$, there exists a complete subgraph $\mathcal{G}_{\triangle}=(V_{\triangle},E_{\triangle})$ of order $3$ in $V_{\textnormal{{\bf f}}}$ such that $n\in V_{\triangle}$. Combine this with the definition and connectivity of $\mathcal{G}_{\bf f}$, it suffices to prove that
		\begin{align}\label{eq89}
			{f}_n = {g}_n, n\in V_{\triangle_p} \cup V_{\triangle_q},
		\end{align}
		where $\mathcal{G}_{\triangle_q}=(V_{\triangle_q},E_{\triangle_q})$ is a complete subgraph of order 3 with $V_{\triangle_q}=\{n_q, m_q, k_q\}$, and $(\mathcal{G}_{\triangle_p}, \mathcal{G}_{\triangle_q})\in E_{\bf f}$. 
		Without loss of generality, let the common non-collinear edge between $\mathcal{G}_{\triangle_p}$ and $\mathcal{G}_{\triangle_q}$ be $(n_p,m_p)=(n_q,m_q)$. 
	 Combine with that $\mathcal{G}_{\triangle_q}$ is a complete subgraph and \eqref{eq54}, we have
		\begin{equation*}
			\left\{
			\begin{array}{ll}
				      (\Re f_{k_q} - {R} g_{k_q} ) \Re f_{n_p} + (\Im  f_{k_q} - \Im  g_{k_q} ) \Im  f_{n_p} {}&  = 0 \\
				   (\Re f_{k_q} - \Re g_{k_q} ) \Re f_{m_p} + (\Im  f_{k_q} - \Im  g_{k_q} ) \Im  f_{m_p} {}& = 0.
			\end{array}\right.
		\end{equation*}
		Together with \eqref{eq05}, we have $\Re f_{k_q} - \Re g_{k_q}=0$ and $\Im  f_{k_q} - \Im  g_{k_q}=0$, that is, ${f}_{k_q} = {g}_{k_q}$. Therefore, \eqref{eq89} holds and thus $\mathbf{f}$ is determined, up to a unimodular constant and conjugation. 
	\end{proof}

\begin{remark} 
    Let $\mathcal{G}=(V,E)$ be a simple graph with complex-valued graph signal $\mathbf{f}=(f_n)_{n\in V}$. Let $\mathcal{G}_{\bf f} = (V_{\bf f},E_{\bf f})$ be the graph as in \eqref{gf.def}.
    For a complete subgraph $\mathcal{G}_\triangle=(V_\triangle,E_\triangle)$ of order $3$ in the vertex set $V_{\bf f}$, the authors in \cite{chen2022phase} assumed that the graph signal $f_n, n\in V_\triangle$, residing on the complete subgraph $\mathcal{G}_\triangle=(V_\triangle,E_\triangle)$ is affinely independent.  Equivalently,  $f_n, n\in V_\triangle$ form a non-degenerate triangle in the complex plane, that is, they do not lie on a common line. In this paper, we dispense with this restriction, and the conclusion remains valid even if $f_n, n\in V_\triangle$, lie on the same straight line.   This improvement holds by assuming the signals residing on the endpoints of the common edge of two complete subgraphs  do not pass through the origin. 
\end{remark}

	Theorem \ref{connect.pr.suff.thm1} shows that  a graph signal $\bf f$ can be determined, up to a unimodular constant and conjugation, from absolute magnitudes on the vertices and relative magnitudes on the neighboring vertices.  It is natural to consider if we can construct some graphs $\mathcal G$ explicitly  such that the signal $\bf f$ can be determined, up to a unimodular constant and conjugation, from absolute magnitudes on the vertices and relative magnitudes on the edges of the proposed graph $
	\mathcal G$.  Our first result shows that the graph ${\mathcal G}_{\bf f}$ is connected if the graph signal $\bf f$ resides on the graph $\mathcal G$ where every vertex is connected to two reference points. Hence the graph signal $\bf f$  can be determined, up to a unimodular constant and conjugation, from the absolute magnitudes on the vertices and relative magnitudes on the edges  by Theorem \ref{connect.pr.suff.thm1}.
	
	\begin{theorem}\label{thm.reference}
		Let $\mathcal{G} = (V, E)$ be a graph with the vertex set $V$ and edge set
		\begin{align*}
			E=\{(n,q):n\in V,q=k,l \}
		\end{align*}
		for some $k\ne l\in V$, and let
		$\mathbf{f}= (f_n)_{n\in V}$ be a complex-valued signal residing on the graph. If the edge $(k, l)$ is non-collinear satisfying \eqref{noncollinear.def}, that is  $f_k\overline{f_l}\neq f_l \overline{f_k}$, then $\mathcal{G}_{\bf f} = (V_{\bf f}, E_{\bf f})$ in \eqref{gf.def} corresponding to  $\mathbf{f}$ is connected. Furthermore,  $\bf f$ can be determined, up to a unimodular constant and conjugation, from absolute magnitudes $ |f_n|, n\in V $ and relative magnitudes $|f_n-f_m|, (n,m)\in E$.
	\end{theorem}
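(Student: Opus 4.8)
The plan is to verify the two claims in turn, using Theorem~\ref{connect.pr.suff.thm1} to deduce the recovery statement once connectivity of $\mathcal{G}_{\bf f}$ is established. First I would record the structural features of $\mathcal{G}=(V,E)$: every vertex $n\in V\setminus\{k,l\}$ is joined to both reference vertices $k$ and $l$, and $k$ is joined to $l$. Hence for each such $n$ the vertices $\{n,k,l\}$ induce a complete subgraph of order $3$, call it $\mathcal{G}_{\triangle_n}$; also $\{k,l\}$ together with any third vertex gives a triangle, so every vertex of $V$ lies in at least one triangle, which is the first hypothesis needed to invoke Theorem~\ref{connect.pr.suff.thm1}. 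Note each triangle $\mathcal{G}_{\triangle_n}$ contains the edge $(k,l)$.

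Next I would argue that $\mathcal{G}_{\bf f}$ is connected. The key observation is that the edge $(k,l)$ is non-collinear by hypothesis, i.e.\ $f_k\overline{f_l}\ne f_l\overline{f_k}$, equivalently $\Re f_k\,\Im f_l\ne \Re f_l\,\Im f_k$; in particular $f_k,f_l\ne 0$. I must first check that each $\mathcal{G}_{\triangle_n}$ with $V_{\triangle_n}=\{n,k,l\}$ actually belongs to $V_{\bf f}$ — that is the only subtlety, since $V_{\bf f}$ is the set of \emph{all} order-$3$ complete subgraphs of $\mathcal G$, so this is automatic. Then, since any two such triangles $\mathcal{G}_{\triangle_n}$ and $\mathcal{G}_{\triangle_{n'}}$ share the common edge $(k,l)$, which is non-collinear, the pair $(\mathcal{G}_{\triangle_n},\mathcal{G}_{\triangle_{n'}})$ lies in $E_{\bf f}$ by the definition \eqref{gf.def}. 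Thus all the triangles through $(k,l)$ form a clique inside $\mathcal{G}_{\bf f}$. The one remaining point is that $\mathcal G$ may contain triangles \emph{not} through $(k,l)$: such a triangle would have to use two edges among $E$, but every edge of $E$ is incident to $k$ or to $l$, so any triangle must contain at least one of $k,l$; a short case check (a triangle on $\{n,n',k\}$ would need edge $(n,n')\in E$, impossible unless one of $n,n'\in\{k,l\}$) shows that in fact \emph{every} triangle of $\mathcal G$ contains both $k$ and $l$. Hence $V_{\bf f}$ consists precisely of the triangles $\{n,k,l\}$, $\mathcal{G}_{\bf f}$ is a complete graph on this vertex set (in particular connected), and if $V_{\bf f}=\varnothing$ then $|V|\le 2$ and the statement is trivial.

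Finally, with both hypotheses of Theorem~\ref{connect.pr.suff.thm1} verified — every vertex of $V$ lies in a triangle, and $\mathcal{G}_{\bf f}$ is connected — that theorem immediately yields that $\bf f$ is determined, up to a unimodular constant and conjugation, from $|f_n|$, $n\in V$, and $|f_n-f_m|$, $(n,m)\in E$, which is the second assertion. I expect the main (though still modest) obstacle to be the bookkeeping that \emph{every} order-$3$ complete subgraph of this particular $\mathcal G$ passes through the non-collinear edge $(k,l)$; once that is nailed down, connectivity — indeed completeness — of $\mathcal{G}_{\bf f}$ is immediate and the rest is a direct citation.
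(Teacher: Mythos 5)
Your proposal is correct and follows essentially the same route the paper intends: the paper omits an explicit proof of this theorem, but its Example \ref{rem2}(a) sketches exactly your argument — every triangle of $\mathcal G$ must contain both reference vertices $k$ and $l$, so all triangles share the non-collinear edge $(k,l)$, making $\mathcal{G}_{\bf f}$ complete and hence connected, after which Theorem \ref{connect.pr.suff.thm1} gives the recovery claim. Your additional bookkeeping (checking that no triangle avoids $(k,l)$, and disposing of the degenerate case $|V|\le 2$) is a welcome tightening of what the paper leaves implicit.
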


	From the above theorem, we know that a graph signal is conjugate phase retrieval  if all its components are connected to two fixed reference vertices. We now extend this concept from a centralized, reference-based structure to a distributed one by considering the circulant graph. 
	
	%
	
	\begin{theorem}\label{PW.adj.thm1}
		Let $\mathcal{G} = (V, E,Q)$ be a circulant graph with the vertex set $V$ and the edge set
		\begin{align*}
			E=\{(n,n\pm q): n\in V, q\in Q \},
		\end{align*} 
		where $Q=\{1,2\}$. Let
		$\mathbf{f}= (f_n)_{n\in V}$ be a complex-valued signal residing on the graph.  If $f_n\overline{f_{n+1}}\neq f_{n+1} \overline{f_n}, n\in V$, then $\mathcal{G}_{\bf f} = (V_{\bf f},E_{\bf f})$ in \eqref{gf.def} corresponding to  $\mathbf{f}$ is connected. Furthermore,  $\bf f$ can be determined, up to a unimodular constant and conjugation, from absolute magnitudes $ |f_n|, n\in V$ and relative magnitudes $|f_{n}-f_m|, (n,m)\in E$.
	\end{theorem}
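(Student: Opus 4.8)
The plan is to deduce this from Theorem~\ref{connect.pr.suff.thm1} by first pinning down the triangle structure of the circulant graph with $Q=\{1,2\}$. Since vertex $n$ is adjacent precisely to $n\pm 1$ and $n\pm 2$, a triple $\{a,b,c\}$ of vertices is a complete subgraph of order $3$ exactly when all three pairwise differences lie in $\{1,2\}$ (read modulo $|V|$ in the finite case); writing $a<b<c$ one has $c-a=(b-a)+(c-b)\ge 2$, and combining with $c-a\le 2$ forces $b-a=c-b=1$. Hence --- at least when $V=\Z$, the very small cyclic cases reducing to (near-)complete graphs already handled by Proposition~\ref{complete.prop.jfa} --- the order-$3$ complete subgraphs of $\mathcal G$ are exactly
\begin{align*}
\mathcal G_{\triangle_n}=\bigl(\{n,n+1,n+2\},\ \{(n,n+1),(n+1,n+2),(n,n+2)\}\bigr),\qquad n\in V.
\end{align*}
In particular every vertex $m\in V$ lies in $\mathcal G_{\triangle_{m-2}}$, so the first hypothesis of Theorem~\ref{connect.pr.suff.thm1} is satisfied, and $V_{\bf f}=\{\mathcal G_{\triangle_n}:n\in V\}$.

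Next I would verify that $\mathcal G_{\bf f}=(V_{\bf f},E_{\bf f})$ is connected. The consecutive triangles $\mathcal G_{\triangle_n}$ and $\mathcal G_{\triangle_{n+1}}$ share the common edge $(n+1,n+2)$, and applying the hypothesis $f_j\overline{f_{j+1}}\ne f_{j+1}\overline{f_j}$ with $j=n+1$ shows that this edge is non-collinear in the sense of \eqref{noncollinear.def}; hence $(\mathcal G_{\triangle_n},\mathcal G_{\triangle_{n+1}})\in E_{\bf f}$ for every $n\in V$. Thus all vertices of $\mathcal G_{\bf f}$ lie on a single path (a cycle when $V$ is finite), so $\mathcal G_{\bf f}$ is connected.

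Finally, with both hypotheses of Theorem~\ref{connect.pr.suff.thm1} in force, I would invoke it to conclude that $\bf f$ is determined, up to a unimodular constant and conjugation, from the absolute magnitudes $|f_n|$, $n\in V$, and the relative magnitudes $|f_n-f_m|$, $(n,m)\in E$, which is exactly the assertion. The only delicate point is the first step: correctly classifying the order-$3$ subgraphs so that $V_{\bf f}$ contains no stray triangle lying outside the path of consecutive ones (e.g.\ ruling out triangles such as $\{n,n+2,n+4\}$, which can occur for a few small vertex counts where the circulant graph degenerates), and handling the wrap-around in the finite circulant case; once the triangle set is known to be exactly $\{\mathcal G_{\triangle_n}:n\in V\}$, the rest is immediate.
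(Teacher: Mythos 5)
Your proposal is correct and follows exactly the route the paper intends (the paper omits a formal proof, but Example \ref{rem2}(b) exhibits precisely this structure): the only order-$3$ complete subgraphs are the consecutive triples $\{n,n+1,n+2\}$, consecutive triples share the distance-one edge $(n+1,n+2)$, which is non-collinear by hypothesis, so $\mathcal G_{\bf f}$ is a connected path/cycle and Theorem \ref{connect.pr.suff.thm1} applies. Your caveat about very small circulant graphs (e.g.\ $|V|\le 6$, where wrap-around creates extra triangles such as $\{0,2,4\}$ whose common edges with the consecutive triangles are distance-two edges not covered by the non-collinearity hypothesis) is a genuine subtlety the paper glosses over, and you handle it appropriately.
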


	To illustrate these theoretical results, we now present concrete examples demonstrating the applications of Theorems \ref{thm.reference} and \ref{PW.adj.thm1}.

	\begin{example}\label{rem2}{\rm
			(a)  Let ${\mathcal G}_1=(V, E_1)$ be a graph   with the vertex set $V$ and edge set
			\begin{align*}
				E_1=\{(n,q):n\in V, q=k,l \}, 
			\end{align*}
			where $k\ne l\in V$ are two reference vertices, see  the plot in the right of Fig. \ref{fig1}.  By the construction of the graph ${\mathcal G}_1$,  every vertex $n\in V\setminus \{k, l\}$ is contained in a  complete subgraph  $\mathcal{G}_{1,\triangle_n} = (V_{1,\triangle_n}, E_{1,\triangle_n})$ of order $3$  with the vertex set
			\begin{align*}
				V_{1,\triangle_n} = \{n, k, l \}
			\end{align*}
			and edge set
			\begin{align*}
				E_{1,\triangle_n} = \{(i,j): i,j \in V_{1,\triangle_n}\}.
			\end{align*}
			Let $\mathbf{f}=(f_n)_{n\in V}$ be a graph signal on ${\mathcal G}_1$ satisfying $ f_k\overline{f_l}\ne f_l\overline{f_k}$. Then  the graph $\mathcal{G}_{1,\mathbf{f}} = (V_{1,\mathbf{f}}, E_{1,\mathbf{f}})$ in \eqref{gf.def} with the vertex set
			\begin{align*}
				V_{1,\mathbf{f}} = \{\mathcal{G}_{1,\triangle_n}: n\in V\setminus \{k,l\} \}
			\end{align*}
			and edge set
			\begin{align*}
				E_{1,\mathbf{f}} = \{(\mathcal{G}_{1,\triangle_n}, \mathcal{G}_{1,\triangle_{m}}): n, m\in V\setminus \{k,l\} \},
			\end{align*}
			is connected, cf. Theorem \ref{thm.reference}.  
		
		\medskip	
			(b)   Let $\mathcal{G}_2=(V,E_2)$ be the graph  with the vertex set $V$ and edge set
			\begin{align*}
				E_2=\{(n,n\pm q): n\in V, q=1,2 \}, 
			\end{align*}
			see the  plot in the left of  Fig. \ref{fig1}.  Then every $n\in V$ is contained in a complete subgraph $\mathcal{G}_{2,\triangle_n} = (V_{2,\triangle_n}, E_{2,\triangle_n})$ of order $3$ with the vertex set
			\begin{align*}
				V_{2,\triangle_n} = \{n-1, n, n+1 \}
			\end{align*}
			and edge set
			\begin{align*}
				E_{2,\triangle_n} = \{(i,j): i,j \in V_{2,\triangle_n}\}.
			\end{align*}
			Let $\mathbf{f}=(f_n)_{n\in V}$ be a graph signal on ${\mathcal G}_2$ satisfying $ f_{n+1}\overline{f_n}\ne f_{n}\overline{f_{n+1}}$ for all $n\in V$. Then  the graph ${\mathcal G}_{2,\bf f} = (V_{2,\mathbf{f}}, E_{2,\mathbf{f}})$ in \eqref{gf.def} with  the vertex set 
			\begin{align*}
				V_{2,\mathbf{f}} = \{\mathcal{G}_{2,\triangle_n}: n\in V \}
			\end{align*}
			and edge set
			\begin{align*}
				E_{2,\mathbf{f}} = \{(\mathcal{G}_{2,\triangle_n}, \mathcal{G}_{2,\triangle_{n+1}}): n\in V \}
			\end{align*}
			is connected, cf. Theorem \ref{PW.adj.thm1}. 
			
			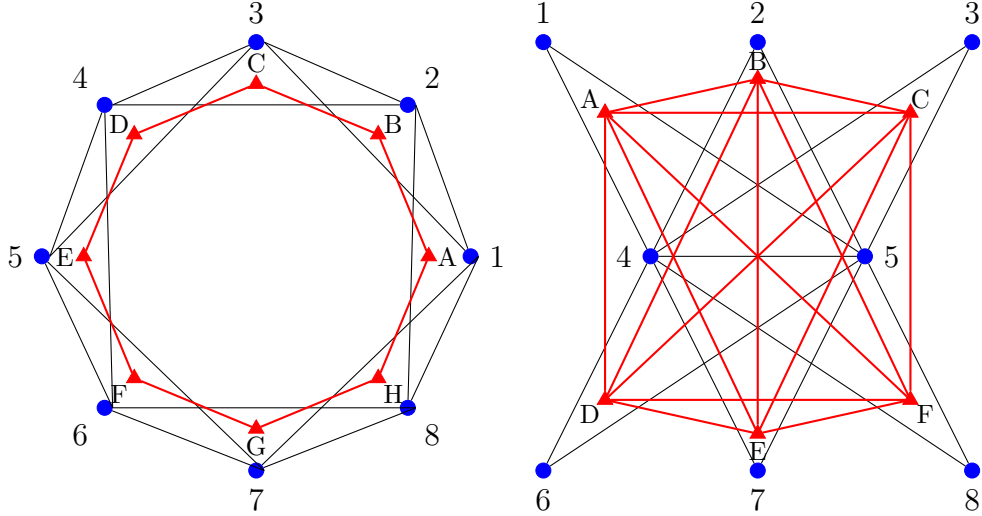
\begin{figure}[t]
				\centering
				\begin{minipage}{0.47\textwidth}
					\centering
					\begin{tikzpicture}[scale=0.95]
						
						\def\radius{3} 
						\def\nodes{8} 

						\node[circle, draw=blue, fill=blue, inner sep=2pt, label=right:{$1$}] (N1) at ({360/\nodes * (1-1)}:\radius) {};
						\node[circle, draw=blue, fill=blue, inner sep=2pt, label=above right:{$2$}] (N2) at ({360/\nodes * (2-1)}:\radius) {};
						\node[circle, draw=blue, fill=blue, inner sep=2pt, label=above:{$3$}] (N3) at ({360/\nodes * (3-1)}:\radius) {};
						\node[circle, draw=blue, fill=blue, inner sep=2pt, label=above left:{$4$}] (N4) at ({360/\nodes * (4-1)}:\radius) {};
						\node[circle, draw=blue, fill=blue, inner sep=2pt, label=left:{$5$}] (N5) at ({360/\nodes * (5-1)}:\radius) {};
						\node[circle, draw=blue, fill=blue, inner sep=2pt, label=below left:{$6$}] (N6) at ({360/\nodes * (6-1)}:\radius) {};
						\node[circle, draw=blue, fill=blue, inner sep=2pt, label=below:{$7$}] (N7) at ({360/\nodes * (7-1)}:\radius) {};
						\node[circle, draw=blue, fill=blue, inner sep=2pt, label=below right :{$8$}] (N8) at ({360/\nodes * (8-1)}:\radius) {};

						\foreach \i in {1,...,\nodes} {
							\pgfmathsetmacro{\j}{mod(\i,\nodes)+1} 
							\pgfmathsetmacro{\k}{mod(\i+1,\nodes)+1} 
							\draw (N\i) -- (N\j); 
							\draw (N\i) -- (N\k); 
						}

						\node[regular polygon, regular polygon sides=3, draw=red, fill=red, scale=0.3, label={[shift={(0.2,-0.25)}]\footnotesize{B}}] (A1) at (barycentric cs:N1=1,N2=1,N3=1) {};
						\node[regular polygon, regular polygon sides=3, draw=red, fill=red, scale=0.3, label={[shift={(0,-0.1)}]\footnotesize{C}}] (A2) at (barycentric cs:N2=1,N3=1,N4=1) {};
						\node[regular polygon, regular polygon sides=3, draw=red, fill=red, scale=0.3, label={[shift={(-0.2,-0.25)}]\footnotesize{D}}] (A3) at (barycentric cs:N3=1,N4=1,N5=1) {};
						\node[regular polygon, regular polygon sides=3, draw=red, fill=red, scale=0.3, label={[shift={(-0.25,-0.4)}]\footnotesize{E}}] (A4) at (barycentric cs:N4=1,N5=1,N6=1) {};
						\node[regular polygon, regular polygon sides=3, draw=red, fill=red, scale=0.3, label={[shift={(-0.2,-0.6)}]\footnotesize{F}}] (A5) at (barycentric cs:N5=1,N6=1,N7=1) {};
						\node[regular polygon, regular polygon sides=3, draw=red, fill=red, scale=0.3, label={[shift={(0,-0.62)}]\footnotesize{G}}] (A6) at (barycentric cs:N6=1,N7=1,N8=1) {};
						\node[regular polygon, regular polygon sides=3, draw=red, fill=red, scale=0.3, label={[shift={(0.2,-0.6)}]\footnotesize{H}}] (A7) at (barycentric cs:N7=1,N8=1,N1=1) {};
						\node[regular polygon, regular polygon sides=3, draw=red, fill=red, scale=0.3, label={[shift={(0.25,-0.4)}]\footnotesize{A}}] (A8) at (barycentric cs:N8=1,N1=1,N2=1) {};

						\draw[-, red, thick] (A1) -- (A2);
						\draw[-, red, thick] (A2) -- (A3);
						\draw[-, red, thick] (A3) -- (A4);
						\draw[-, red, thick] (A4) -- (A5);
						\draw[-, red, thick] (A5) -- (A6);
						\draw[-, red, thick] (A6) -- (A7);
						\draw[-, red, thick] (A7) -- (A8);
						\draw[-, red, thick] (A8) -- (A1);

					\end{tikzpicture}
					
				\end{minipage}
				\hspace{-0.06\textwidth}
				\begin{minipage}{0.47\textwidth}
					\centering
					\begin{tikzpicture}[scale=0.95]		
						\node[circle, draw=blue, fill=blue, inner sep=2pt, label=above:{1}] (N1) at (-3, 3) {};
						\node[circle, draw=blue, fill=blue, inner sep=2pt, label=above:{2}] (N2) at (0, 3) {};
						\node[circle, draw=blue, fill=blue, inner sep=2pt, label=above:{3}] (N3) at (3, 3) {};

						\node[circle, draw=blue, fill=blue, inner sep=2pt, label=left:{4}] (N4) at (-1.5, 0) {};
						\node[circle, draw=blue, fill=blue, inner sep=2pt, label=right:{5}] (N5) at (1.5, 0) {};

						\node[circle, draw=blue, fill=blue, inner sep=2pt, label=below:{6}] (N6) at (-3, -3) {};
						\node[circle, draw=blue, fill=blue, inner sep=2pt, label=below:{7}] (N7) at (0, -3) {};
						\node[circle, draw=blue, fill=blue, inner sep=2pt, label=below:{8}] (N8) at (3, -3) {};

						\draw (N4) -- (N1);
						\draw (N4) -- (N2);
						\draw (N4) -- (N3);
						\draw (N5) -- (N1);
						\draw (N5) -- (N2);
						\draw (N5) -- (N3);

						\draw (N4) -- (N6);
						\draw (N4) -- (N7);
						\draw (N4) -- (N8);
						\draw (N5) -- (N6);
						\draw (N5) -- (N7);
						\draw (N5) -- (N8);

						\draw (N4) -- (N5);

						\node[regular polygon, regular polygon sides=3, draw=red, fill=red, scale=0.3, shift={(-3.6,3.2)}, label={[shift={(-0.2,-0.2)}]\footnotesize{A}}] (A1) at (barycentric cs:N1=1,N4=1,N5=1) {};
						\node[regular polygon, regular polygon sides=3, draw=red, fill=red, scale=0.3, shift={(0,4.7)}, label={[shift={(0,-0.15)}]\footnotesize{B}}] (A2) at (barycentric cs:N2=1,N4=1,N5=1) {};
						\node[regular polygon, regular polygon sides=3, draw=red, fill=red, scale=0.3, shift={(3.6,3.2)}, label={[shift={(0.13,-0.2)}]\footnotesize{C}}] (A3) at (barycentric cs:N3=1,N4=1,N5=1) {};
						\node[regular polygon, regular polygon sides=3, draw=red, fill=red, scale=0.3, shift={(-3.6,-3.2)}, label={[shift={(-0.2,-0.57)}]\footnotesize{D}}] (A6) at (barycentric cs:N6=1,N4=1,N5=1) {};
						\node[regular polygon, regular polygon sides=3, draw=red, fill=red, scale=0.3, shift={(0,-4.7)}, label={[shift={(0,-0.62)}]\footnotesize{E}}] (A7) at (barycentric cs:N7=1,N4=1,N5=1) {};
						\node[regular polygon, regular polygon sides=3, draw=red, fill=red, scale=0.3, shift={(3.6,-3.2)}, label={[shift={(0.2,-0.57)}]\footnotesize{F}}] (A8) at (barycentric cs:N8=1,N4=1,N5=1) {};
						
						\draw[-, red, thick] (A1) -- (A2);
						\draw[-, red, thick] (A1) -- (A3);
						\draw[-, red, thick] (A1) -- (A6);
						\draw[-, red, thick] (A1) -- (A7);
						\draw[-, red, thick] (A1) -- (A8);
						\draw[-, red, thick] (A2) -- (A3);
						\draw[-, red, thick] (A2) -- (A6);
						\draw[-, red, thick] (A2) -- (A7);
						\draw[-, red, thick] (A2) -- (A8);
						\draw[-, red, thick] (A3) -- (A6);
						\draw[-, red, thick] (A3) -- (A7);
						\draw[-, red, thick] (A3) -- (A8);
						\draw[-, red, thick] (A6) -- (A7);
						\draw[-, red, thick] (A6) -- (A8);
						\draw[-, red, thick] (A7) -- (A8);
					\end{tikzpicture}

				\end{minipage}

				\caption{
					Plotted on the left are the graph $\mathcal{G}_2=(V, E_2)$, constructed in Example \ref{rem2} (b),   with vertices marked by blue dots and edges marked in black lines. Let $\mathbf{f}=(f_n)_{n\in V}$ be a complex-valued graph signal on $\mathcal{G}_2=(V, E_2)$. Then graph $\mathcal{G}_{2, {\bf f}}$ is connected  with  vertices from $A$ to $H$ corresponding to $\{\triangle 812, \triangle 123, \triangle 234, \triangle 345, \triangle 456, \triangle 567, \triangle 678, \triangle 781\}$, marked in red  filled triangles, and with  the undirected edges $\{(A, B), (B,C), (C, D), (D,E), (E, F), (F, G), (G, H), (H, A) \}$, marked in red solid lines, where the edge $(n,n+1)\in E_2$ does not pass the origin  in the complex plane for every $n\in V$. \\
					Plotted on the right are the graph $\mathcal{G}_1=(V, E_1)$, constructed in Example \ref{rem2} (a),  with vertices marked by blue dots and edges marked in black lines. Let $\mathbf{f}=(f_n)_{n\in V}$ be a complex-valued graph signal on $\mathcal{G}_1=(V, E_1)$. Then graph $\mathcal{G}_{1,\mathbf{f}}$ is connected with vertices from $A$ to $F$ corresponding to  $\{\triangle 145, \triangle 245, \triangle 345, \triangle 645, \triangle 745, \triangle 845\}$, marked in red filled triangles, and with undirected edges $\{\hskip-.02in (A,B),\hskip-.02in (A,C), \hskip-.02in (A,D), \hskip-.02in (A,E), \hskip-.02in (A,F), \hskip-.02in (B,C), \hskip-.02in (B,D), \hskip-.02in (B,E), \hskip-.02in (B,F), \hskip-.02in (C,D),\allowbreak (C,E),(C,F),(D,E),(D,F),(E,F)\}$, marked in red solid lines, where  the edge $(k,l)\in E_1$ does not pass the origin in the complex plane with $k=4, l=5$.
				} \label{fig1}
			\end{figure}

		}
		
	\end{example}

	\section{Conjugate phase retrieval in the Paley-Wiener space}\label{cpr.pw.sec}

The classical Whittaker-Shannon-Kotel'nikov (WSK) sampling theorem states that any function  in the Paley-Wiener space can be exactly reconstructed from its samples taken at the Nyquist rate or higher. Specifically, 
\begin{align*}
	f(x)=\sum_{n\in \Z} f(\frac{n}{2B}) sinc(2Bx-n), \ \forall x\in \R,
\end{align*}
where $f\in PW_B^2$ in \eqref{PW.def}. 
This reconstruction formula remains valid under uniform shifts of the sampling lattice. That is, for any fixed 
$x_0\in \R$, we have the equivalent representation:
\begin{align}\label{pw.rec.for1}
	f(x)=\sum_{n\in \Z} f(x_0+\frac{n}{2B}) sinc(2B(x-x_0)-n), \ \forall x\in \R,
\end{align}
where $f\in PW_B^2$ in \eqref{PW.def}. For the reader's convenience, we provide the proof of \eqref{pw.rec.for1} in Lemma \ref{lem.WSK}. 
%

In \cite{mc2004phase}, the authors studied the determination of the  complex-valued bandlimited signals, up to a unimodular constant and conjugation,  from the magnitudes of structured convolutions $|f|$ and $|f(\cdot+c)-f|$ for some constant $c>0$, see below.  
\begin{lemma}\cite{mc2004phase}\label{lem2}
	Let $f\in PW^2_B$ be as in \eqref{PW.def} and $0<c\le \frac{1}{2B}$. Then $f$ can be determined, up to a unimodular constant and conjugation, from its absolute magnitudes $ |f(x)|, x\in \R $ and relative magnitudes $ |f(x + c)-f(x)|, x\in \R $.
\end{lemma}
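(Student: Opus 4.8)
The plan is to reduce the problem, via polarization, to recovering the lag-$c$ correlation function $F(x):=f(x+c)\overline{f(x)}$ up to complex conjugation, and then to recover $f$ from $F$ (together with $|f|$) up to a unimodular constant, the latter step being where the bandwidth constraint $c\le\frac1{2B}$ enters. From the given data one forms, for every $x\in\R$,
\[
P(x):=\Re\!\big(f(x+c)\overline{f(x)}\big)=\tfrac12\big(|f(x+c)|^2+|f(x)|^2-|f(x+c)-f(x)|^2\big)
\]
together with $Q(x):=|f(x+c)|^2|f(x)|^2$, and hence $R(x):=Q(x)-P(x)^2=\big(\Im(f(x+c)\overline{f(x)})\big)^2$. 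The reason for passing to $F$ is that the two trivial ambiguities act transparently on it: replacing $f$ by $e^{i\alpha}f$ leaves $F$ unchanged, while replacing $f$ by $\overline f$ replaces $F$ by $\overline F$. So it suffices to (i) recover $F$ up to conjugation from $P$ and $R$, and (ii) show that $F$ and $|f|$ together determine $f$ up to a unimodular constant.

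For step (i), write $f^*(z):=\overline{f(\overline z)}$, which is again entire of the same exponential type. Then $|f|^2=ff^*$ on $\R$, and $P=\tfrac12\big(f(\cdot+c)f^*+f^*(\cdot+c)f\big)$, $R$, and $S:=\Im\big(f(\cdot+c)f^*\big)$ all extend to entire functions, with $S^2=R$. The function $P$ is recovered exactly. If $R\equiv0$, then $S\equiv0$ and $F=P$ is fully determined. Otherwise the zeros of $S$ (equivalently of $R$) on $\R$ are isolated; on each connected component of $\R$ minus this zero set, $S=\pm\sqrt R$ with locally constant sign, and across a zero $x_0$ the sign of $S$ flips precisely when $S$ vanishes to odd order at $x_0$, i.e.\ when the order of vanishing of $R$ at $x_0$ is $\equiv2\pmod4$ --- information that is read off from $R$ alone. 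Hence $S$, and therefore $F$, is determined up to one global sign, which is exactly the conjugation ambiguity.

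For step (ii), suppose $f,g\in PW_B^2$ satisfy $|f|=|g|$ on $\R$ and $f(\cdot+c)\overline{f(\cdot)}=g(\cdot+c)\overline{g(\cdot)}$. Comparing orders of vanishing shows $f$ and $g$ have exactly the same real zeros with the same multiplicities, so $u:=g/f$ extends to a real-analytic function on $\R$ with $|u|\equiv1$; the functional equation then gives $u(x+c)\overline{u(x)}=1$, hence $u(x+c)=u(x)$, so $u$ is $c$-periodic. Being real-analytic and $c$-periodic, $u$ has an absolutely convergent Fourier expansion $u(x)=\sum_{n\in\Z}a_n e^{2\pi inx/c}$, so $\widehat u=\sum_n a_n\delta_{n/c}$ is a finite measure and $\widehat g=\widehat{uf}=\widehat u*\widehat f=\sum_n a_n\,\widehat f(\cdot-n/c)$. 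Since $\tfrac1c\ge2B$, for $n\ne0$ the translate $[-B,B]+n/c$ meets $[-B,B]$ in a null set, so on the interior of each such translate $\widehat g$ agrees (up to a null set) with $a_n\widehat f(\cdot-n/c)$; as $\widehat g$ is supported in $[-B,B]$ and $\widehat f\not\equiv0$ there, this forces $a_n=0$ for all $n\ne0$. Thus $u\equiv a_0$ with $|a_0|=1$ and $g=a_0f$. Combining (i) and (ii) yields the claim.

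I expect the main obstacle to be step (i): carefully establishing that $S=\Im F$ extends to an entire function with a discrete real zero set --- in particular its behavior near the zeros of $f$ and of $f(\cdot+c)$ --- and that the pattern of sign flips of $S$ is pinned down by the local vanishing orders of $R$ (so that the global sign, and only the global sign, remains free). A secondary technical point is in step (ii): justifying that $u=g/f$ has no real poles (so that it is genuinely real-analytic with $|u|\equiv1$ on all of $\R$) and that the product--convolution identity $\widehat{uf}=\widehat u*\widehat f$ is valid for the bounded function $u$ and $f\in L^2$.
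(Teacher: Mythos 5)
The paper does not prove this lemma at all: it is imported verbatim from \cite{mc2004phase}, so there is no in-paper argument to compare against. Judged on its own, your proof is correct and essentially complete, and it follows the same broad strategy as McDonald's original argument (polarize to get $\Re\big(f(\cdot+c)\overline{f}\big)$, use entire-function extensions of the real-restricted quantities to pin down $\Im\big(f(\cdot+c)\overline{f}\big)$ up to a global sign, and let the constraint $c\le\frac1{2B}$ enter through a Fourier-support/periodicity argument for the unimodular quotient $u=g/f$). The technical points you flag are genuine but all resolvable: $P$, $Q$, $R$ and $S$ extend to entire functions via $f^*(z)=\overline{f(\bar z)}$, so $R$'s vanishing orders are well defined; and $u$ is holomorphic on a neighborhood of $\R$ (real zeros of $f$ and $g$ match with multiplicity because $\operatorname{ord}_{x_0}(ff^*)=2\operatorname{ord}_{x_0}(f)$ at real $x_0$), hence real-analytic and $c$-periodic with exponentially decaying Fourier coefficients, which justifies $\widehat{uf}=\sum_n a_n\widehat f(\cdot-n/c)$ in $L^2$. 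One simplification worth noting for step (i): since for the uniqueness statement you only need that any competitor $S'$ is an entire function, real on $\R$, with $(S')^2=S^2$, you can skip the local sign-flip bookkeeping entirely --- the ratio $S'/S$ is an entire function whose square is $1$, hence identically $\pm1$, which gives the global-sign (i.e.\ conjugation) dichotomy at once. Minor housekeeping: the degenerate cases $f\equiv0$ and $S\equiv0$ should be dispatched explicitly, as you partially do.
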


From above lemma, we know that $f\in PW_B^2$ can be determined, up to a unimodular constant and conjugation, from the structured magnitudes $|f|$ and $|f(\cdot+ c)- f(\cdot)|$ on the whole real line. Then one may wonder if the conjugate phase retrieval of the bandlimited signals is possible from the phaseless samples taken on a discrete set. Combine the WSK sampling theorem with the observation that  $|f|^2\in PW_{2B}^1\subset PW_{2B}^2$ and $|f(\cdot+ c)- f(\cdot)|^2 \in PW_{2B}^1\subset PW_{2B}^2$ for any constant $c\in \R$ with $f\in PW_B^2$, we immediately have the following theorem.

\begin{theorem}\label{thm2}
	Let $f\in PW_B^2$ be as in \eqref{PW.def} and $0<c\le \frac{1}{2B}$. Then $f$ can be determined, up to a unimodular constant and conjugation, from its absolute magnitudes $ |f(\frac{n}{4B})|$ and  relative magnitudes $ |f(\frac{n}{4B} + c)-f(\frac{n}{4B})|, n\in \Z $.
\end{theorem}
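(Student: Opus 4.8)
\textbf{Proof proposal for Theorem~\ref{thm2}.}
The plan is to reduce the statement to Lemma~\ref{lem2} by using the Whittaker--Shannon--Kotel'nikov sampling theorem to pass from the discrete phaseless data at rate $4B$ back to the continuous magnitude data $|f(x)|$ and $|f(x+c)-f(x)|$ on all of $\R$, and then to quote Lemma~\ref{lem2} verbatim.

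First I would record the two bandwidth facts. Since $\mathcal{F}f$ is supported in $[-B,B]$, so is $\mathcal{F}\overline{f}(\xi)=\overline{\mathcal{F}f(-\xi)}$, whence $\mathcal{F}(|f|^2)=\mathcal{F}f*\mathcal{F}\overline{f}$ is supported in $[-2B,2B]$; combined with $|f|^2\in L^1\cap L^\infty$ (a consequence of $f\in L^2$ together with the boundedness of $L^2$-bandlimited functions), this gives $|f|^2\in PW_{2B}^1\subset PW_{2B}^2$. Likewise, setting $g(x):=f(x+c)-f(x)$, the function $g$ is a linear combination of a translate of $f$ and $f$ itself, so $\mathcal{F}g$ is again supported in $[-B,B]$, i.e.\ $g\in PW_B^2$; hence $|g|^2\in PW_{2B}^2$ by the same computation. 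Note that the hypothesis $0<c\le\frac{1}{2B}$ plays no role in these bandwidth observations — it is only needed at the very end.

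Next I would invoke the WSK sampling theorem for the space $PW_{2B}^2$, whose Nyquist spacing is $\frac{1}{4B}$: the samples $\{|f|^2(\frac{n}{4B})\}_{n\in\Z}=\{|f(\frac{n}{4B})|^2\}_{n\in\Z}$ determine $|f|^2$, hence $|f(x)|$, for every $x\in\R$; and the samples $\{|g|^2(\frac{n}{4B})\}_{n\in\Z}=\{|f(\frac{n}{4B}+c)-f(\frac{n}{4B})|^2\}_{n\in\Z}$ determine $|g|^2$, hence $|f(x+c)-f(x)|=|g(x)|$, for every $x\in\R$. (No shifted-lattice version of \eqref{pw.rec.for1} is needed, since both sets of samples live on the single lattice $\{\frac{n}{4B}\}_{n\in\Z}$.) At this stage the continuous data $|f(x)|,\ x\in\R$, and $|f(x+c)-f(x)|,\ x\in\R$, are in hand, and since $0<c\le\frac{1}{2B}$, Lemma~\ref{lem2} applies directly and yields that $f$ is determined up to a unimodular constant and conjugation.

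There is no substantial obstacle here; the only points requiring a little care are the doubling of bandwidth upon squaring the magnitude — which forces the sampling spacing $\frac{1}{4B}$, i.e.\ twice the Nyquist rate of $PW_B^2$, rather than $\frac{1}{2B}$ — and the verification that $|f|^2$ and $|g|^2$ genuinely belong to $PW_{2B}^2$ via the integrability and support arguments above, so that WSK is legitimately applicable.
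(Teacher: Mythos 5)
Your proposal is correct and follows essentially the same route as the paper: both observe that $|f|^2$ and $|f(\cdot+c)-f(\cdot)|^2$ lie in $PW_{2B}^1\subset PW_{2B}^2$, recover the continuous magnitude data from the lattice $\{\tfrac{n}{4B}\}_{n\in\Z}$ via the WSK sampling theorem, and then conclude by Lemma~\ref{lem2}. Your write-up merely supplies the support-convolution and integrability details that the paper leaves to the reader.
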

\begin{proof}
	If $f=0$, it is trivial. For a nonzero signal $f\in PW_{B}^2$, let 
	$g\in PW_B^2$ satisfy 
	\begin{align}\label{eq64}
		|f(\frac{n}{4B})| = |g(\frac{n}{4B})| \text{ and } |f(\frac{n}{4B} + c)-f(\frac{n}{4B})| = |g(\frac{n}{4B} + c)-g(\frac{n}{4B})|,  n\in \Z.
	\end{align}
	One may easily verify that $|f|^2\in PW_{2B}^1\subset PW_{2B}^2$ and $|f(\cdot+ c)- f(\cdot)|^2 \in PW_{2B}^1\subset PW_{2B}^2$. By the WSK sampling theorem, we know that $|f|$ and $|f(\cdot+ c)- f(\cdot)|$ can be determined from  $|f(\frac{n}{4B})|$ and  $ |f(\frac{n}{4B} + c)-f(\frac{n}{4B})|, n\in \Z$, respectively. Combine this with \eqref{eq64}, we have 
	\begin{align*}
		|f(x)|=|g(x)|, |f(x+c)-f(x)|=|g(x+c)-g(x)|, x\in \R.
	\end{align*}
	By Lemma \ref{lem2}, we have
	\begin{align*}
		f=e^{i\alpha} g \text{ or } f=e^{i\alpha} \overline{g}, \text{ for some } \alpha \in \R,
	\end{align*}
	which completes the proof.
\end{proof}

For a complex-valued signal  $f\in PW_{B}^2$ in \eqref{PW.def}, Theorem \ref{thm2} shows that it can be determined, up to a unimodular constant and conjugation, from its phaseless samples  $ |f(\frac{n}{4B})|$ and  $ |f(\frac{n}{4B} + c)-f(\frac{n}{4B})|, n\in \Z $ with four times the Nyquist rate.   In Theorems \ref{thm.reference} and \ref{PW.adj.thm1}, we see that conjugate phase retrieval  is possible for the graph signal with the absolute magnitudes on the vertices and relative magnitudes on the edges. We then apply these theorems to the function in the Paley-Wiener space, assuming that the graph signal residing on the graph is the function values. First, we explore the graph structure with two reference points in Theorem \ref{thm.reference}, and show that conjugate phase retrieval is possible with the structured phaseless samples at three times the Nyquist rate.
%

\begin{theorem}\label{pw.2pts.thm1}
	Let  $f\in PW_B^2$ in \eqref{PW.def} be a complex-valued function that is not real-valued up to a unimodular constant and $x_0\in \R$ be so chosen that $f(x_0+\frac{k}{2B})$ and $f(x_0+\frac{l}{2B})$ are non-collinear for some $k\ne l\in \Z$, that is, 
	\begin{align*}
		f(x_0+\frac{k}{2B})\overline{f(x_0+\frac{l}{2B})}\ne f(x_0+\frac{l}{2B})\overline{f(x_0+\frac{k}{2B})}.
	\end{align*}
Then $f$ can be determined, up to a unimodular constant and conjugation, from absolute phaseless samples $ |f(x_0+\frac{n}{2B})|, n\in \Z $ and relative phaseless samples $  |f(x_0+\frac{n}{2B})-f(x_0+\frac{k}{2B})|, |f(x_0+\frac{n}{2B})-f(x_0+\frac{l}{2B})|, n\in \Z  $.
\end{theorem}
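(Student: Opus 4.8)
The plan is to reinterpret the Nyquist-rate sample sequence of $f$ as a graph signal on the two--reference--point graph of Theorem~\ref{thm.reference}, apply that theorem to recover the sequence up to a unimodular constant and conjugation, and then transport this ambiguity back to $PW_B^2$ via the shifted WSK reconstruction formula~\eqref{pw.rec.for1}.

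Concretely, I would take $V=\Z$ and $E=\{(n,q):n\in V,\ q=k,l\}$, so that $\mathcal G=(V,E)$ is precisely the graph in Theorem~\ref{thm.reference}, and associate to the sampling points $x_0+\tfrac n{2B}$, $n\in\Z$, the graph signal $\mathbf f=(f_n)_{n\in V}$ with $f_n:=f\big(x_0+\tfrac n{2B}\big)$. The hypothesis that $f(x_0+\tfrac k{2B})$ and $f(x_0+\tfrac l{2B})$ are non-collinear is exactly the condition that the reference edge $(k,l)$ satisfies \eqref{noncollinear.def}. Before anything else I would check that the hypothesis on $x_0$ is non-vacuous: since $f$ is continuous and not identically zero, choose $x_0$ with $f(x_0)\ne 0$; if all the $f_n$ were real multiples of $f(x_0)$, then $e^{-i\arg f(x_0)}f$ would be a function in $PW_B^2$ that is real-valued on the lattice $x_0+\tfrac1{2B}\Z$, hence --- by \eqref{pw.rec.for1} and the fact that the $sinc$--translates are real-valued --- real-valued on all of $\R$, contradicting that $f$ is not real-valued up to a unimodular constant; so some pair $f_k,f_l$ is non-collinear.

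Next I would run the uniqueness argument. Let $g\in PW_B^2$ produce the same phaseless data, i.e. $|g(x_0+\tfrac n{2B})|=|f(x_0+\tfrac n{2B})|$ for all $n$, and $|g(x_0+\tfrac n{2B})-g(x_0+\tfrac q{2B})|=|f(x_0+\tfrac n{2B})-f(x_0+\tfrac q{2B})|$ for all $n$ and $q\in\{k,l\}$. Putting $g_n:=g(x_0+\tfrac n{2B})$, these say $|f_n|=|g_n|$ for $n\in V$ and $|f_n-f_m|=|g_n-g_m|$ for $(n,m)\in E$. Theorem~\ref{thm.reference} then applies and produces $\alpha\in\R$ with either $g_n=e^{i\alpha}f_n$ for all $n$ or $g_n=e^{i\alpha}\overline{f_n}$ for all $n$.

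Finally I would lift the coincidence of samples to a coincidence of functions. Since $\overline f\in PW_B^2$ as well (conjugation reflects the Fourier support through the origin), each of $f$, $\overline f$, $g$ is reconstructed from its values on $x_0+\tfrac1{2B}\Z$ by \eqref{pw.rec.for1}; substituting the relation between the samples into that formula gives $g=e^{i\alpha}f$ in the first case and $g=e^{i\alpha}\overline f$ in the second, which is the claim. For the rate count, one absolute value together with two relative magnitudes at each node $x_0+\tfrac n{2B}$ gives $6B$ measurements per unit length, i.e. three times the Nyquist rate $2B$. I do not expect a genuine obstacle here --- the argument is a direct reduction to Theorem~\ref{thm.reference} plus WSK; the two steps that deserve explicit justification are the non-vacuousness of the hypothesis on $x_0$ (which uses the pointwise WSK formula and the ``real on one Nyquist lattice $\Rightarrow$ real everywhere'' observation) and the conjugation branch, which needs closure of $PW_B^2$ under conjugation so that \eqref{pw.rec.for1} can be invoked for $\overline f$.
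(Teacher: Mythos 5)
Your proposal is correct and follows essentially the route the paper intends: the paper gives no separate written proof of this theorem but explicitly derives it by viewing the Nyquist-rate samples as a graph signal on the two-reference-vertex graph of Theorem~\ref{thm.reference} and then invoking the WSK reconstruction formula~\eqref{pw.rec.for1} (Lemma~\ref{lem.WSK}) to pass from samples back to the function. Your additional observations --- that the hypothesis on $x_0$ is non-vacuous and that $PW_B^2$ is closed under conjugation so \eqref{pw.rec.for1} applies to $\overline f$ --- are correct and only strengthen the argument.
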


In the above theorem, a sampling scheme with three times the Nyquist rate has been proposed, and we show that conjugate phase retrieval is possible for the signal $f\in PW_B^2$ with structured phaseless samples. 
Specifically, if there exist two reference sampling points such that their corresponding function values are non-collinear, then $f\in PW_B^2$ can be determined, up to a unimodular constant and conjugation,  from the phaseless samples on the sampling set and relative phaseless samples with respect to
the reference points, cf. Theorem \ref{thm.reference}.  Next, we will explore the circulant graph structure in Theorem  \ref{PW.adj.thm1}, and show that the phaseless samples on the sampling set and relative phaseless samples with respect to the first-order neighbors and seconde-order neighbors are sufficient for the conjugate phase retrieval of complex-valued bandlimited signals.

\begin{theorem}\label{PW.adj.thm2}
Let  $f\in PW_B^2$ in  \eqref{PW.def} be any complex-valued function and $x_0\in \R$ be so chosen that  
\begin{align}\label{eq97}
	f(x_0+\frac{n+1}{2B})\overline{f(x_0+\frac{n}{2B})} \ne f(x_0+\frac{n}{2B}) \overline{f(x_0+\frac{n+1}{2B})}
\end{align}
for all $n\in \Z$. Then $f$ can be determined, up to a unimodular constant and conjugation, from absolute phaseless samples $ |f(x_0+\frac{n}{2B})|, n\in \Z $ and relative phaseless samples $  |f(x_0+\frac{n+1}{2B})-f(x_0+\frac{n}{2B})|, |f(x_0+\frac{n+2}{2B})-f(x_0+\frac{n}{2B})|, n\in \Z$.   
\end{theorem}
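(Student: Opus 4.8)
The plan is to reduce this statement to the graph-theoretic result in Theorem \ref{PW.adj.thm1} together with the WSK sampling theorem and the observation (already used in the proof of Theorem \ref{thm2}) that $|f|^2$ and the various second-difference power functions lie in $PW_{2B}^1 \subset PW_{2B}^2$. First I would dispose of the trivial case $f \equiv 0$. For $f \neq 0$, suppose $g \in PW_B^2$ satisfies $|g(x_0 + \tfrac{n}{2B})| = |f(x_0 + \tfrac{n}{2B})|$, $|g(x_0+\tfrac{n+1}{2B}) - g(x_0+\tfrac{n}{2B})| = |f(x_0+\tfrac{n+1}{2B}) - f(x_0+\tfrac{n}{2B})|$, and $|g(x_0+\tfrac{n+2}{2B}) - g(x_0+\tfrac{n}{2B})| = |f(x_0+\tfrac{n+2}{2B}) - f(x_0+\tfrac{n}{2B})|$ for all $n \in \Z$. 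I want to conclude $g = e^{i\alpha} f$ or $g = e^{i\alpha}\overline{f}$ for some $\alpha \in \R$.

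The key step is to set up the graph. Let $V = \Z$ and let $\mathcal{G} = (V, E, Q)$ be the circulant graph of Theorem \ref{PW.adj.thm1} with $Q = \{1,2\}$, so $E = \{(n, n\pm q): n \in V, q \in Q\}$. Define the graph signal $\mathbf{f} = (f_n)_{n\in V}$ by $f_n := f(x_0 + \tfrac{n}{2B})$. Hypothesis \eqref{eq97} says exactly that $f_n\overline{f_{n+1}} \neq f_{n+1}\overline{f_n}$ for all $n$, i.e. every edge $(n,n+1)$ is non-collinear, so Theorem \ref{PW.adj.thm1} applies to $\mathbf{f}$. It remains to verify that the hypothesized phaseless data let me recover the absolute magnitudes $|f_n|$ on all vertices and the relative magnitudes $|f_n - f_m|$ on all edges of $\mathcal{G}$, both for $\mathbf{f}$ and for the candidate graph signal $\mathbf{g} = (g(x_0 + \tfrac{n}{2B}))_{n\in V}$, and that the two graph signals carry the same such data. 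For the absolute magnitudes this is immediate from the assumption. For the edges, an edge of $\mathcal{G}$ is either $(n, n+1)$ or $(n, n+2)$: the first-order differences $|f_n - f_{n+1}| = |f(x_0+\tfrac{n+1}{2B}) - f(x_0+\tfrac{n}{2B})|$ and second-order differences $|f_n - f_{n+2}| = |f(x_0+\tfrac{n+2}{2B}) - f(x_0+\tfrac{n}{2B})|$ are precisely the given relative phaseless samples, and the same identities hold for $g$; thus $\mathbf{f}$ and $\mathbf{g}$ agree on all absolute and relative magnitudes over $\mathcal{G}$. By Theorem \ref{PW.adj.thm1}, $\mathbf{g} = e^{i\alpha}\mathbf{f}$ or $\mathbf{g} = e^{i\alpha}\overline{\mathbf{f}}$ for some $\alpha \in \R$, i.e. the sampled sequences agree up to a global unimodular constant and conjugation.

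Finally I would upgrade this pointwise-on-the-lattice conclusion to an identity of functions. Since $f, g \in PW_B^2$ and $X = \{x_0 + \tfrac{n}{2B}: n \in \Z\}$ is a sampling (hence uniqueness) set for $PW_B^2$ — this is the content of the shifted WSK formula \eqref{pw.rec.for1} — the equality $g(x_0 + \tfrac{n}{2B}) = e^{i\alpha} f(x_0 + \tfrac{n}{2B})$ for all $n$ forces $g = e^{i\alpha} f$ on $\R$; and likewise, noting that $\overline{f} \in PW_B^2$ as well (the spectrum of $\overline{f}$ is the reflected spectrum, still supported in $[-B,B]$), the equality $g(x_0 + \tfrac{n}{2B}) = e^{i\alpha}\overline{f(x_0 + \tfrac{n}{2B})}$ for all $n$ forces $g = e^{i\alpha}\overline{f}$ on $\R$. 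This completes the argument. The step I expect to require the most care is the bookkeeping in verifying the hypothesis of Theorem \ref{PW.adj.thm1} is met by $\mathbf{f}$ — in particular, confirming that \eqref{eq97} is genuinely equivalent to non-collinearity of every consecutive pair and that no further conditions (such as non-collinearity of second-order neighbors) are needed — but this is exactly what Theorem \ref{PW.adj.thm1} was designed to supply, so no real obstacle arises.
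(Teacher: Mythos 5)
Your proposal is correct and follows essentially the route the paper intends for this theorem: identify the sample sequence with a graph signal on the circulant graph of Theorem \ref{PW.adj.thm1} (condition \eqref{eq97} being exactly the required non-collinearity of consecutive samples), conclude that the sampled sequences of $f$ and $g$ agree up to a unimodular constant and conjugation, and then upgrade to equality of functions via the shifted WSK expansion of Lemma \ref{lem.WSK}, using that $\overline{f}\in PW_B^2$. No gaps; the bookkeeping you flag is indeed all that Theorem \ref{PW.adj.thm1} requires.
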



\begin{remark}\label{rem1}
 The condition \eqref{eq97} holds for almost every $x_0\in \R$. 
In fact, fix $c>0$ and let $f\in PW_B^2$ be any nonzero signal in the Paley-Wiener space, then $f(\cdot+c)\overline{f(\cdot)} - f(\cdot) \overline{f(\cdot+c)}$ is the restriction of an entire function to the real axis, and its zeros are either $\R$ or a countable set.  Since $f$ is a nonzero function, we obtain that the zeros of $f(\cdot+c)\overline{f(\cdot)} - f(\cdot) \overline{f(\cdot+c)}$ are countable.
\end{remark}

\begin{remark}
Theorem \ref{PW.adj.thm2} has been discussed in \cite{lai2021conjugate} for  the case of structured convolution with
\begin{align*}
	V= \begin{bmatrix}
		1 & 0 & 0 & 1 & 1 & 0 \\
		0 & 1 & 0 & -1 & 0 & 1 \\
		0 & 0 & 1 & 0 & -1 & -1 \\
	\end{bmatrix}
\end{align*}
and $b=(0,\frac{1}{2B},\frac{1}{B})^T$. However, the proof in this paper is different as the one therein. Also, for the reconstruction, we use a different approach other than the Gerchberg-Saxton method used in \cite{lai2021conjugate}, see Algorithm \ref{alg1}.
\end{remark}

We remark that from above theorems, we show that a complex-valued signal in the Paley-Wiener space can be determined, up to a unimodular and conjugation, from structured phaseless samples. However, no explicit reconstruction algorithm has been proposed in this context. In Section~\ref{cpr.sis.sec}, we adopt a different approach to establish conjugate phase retrieval in shift-invariant spaces, which can also be extended to the Paley-Wiener space. Moreover, this approach naturally leads to a numerical reconstruction algorithm based on the proof, see Algorithm \ref{alg1}.

If $f\in PW_B^2$ in \eqref{PW.def} is real-valued or real-valued up to a rotation, then \eqref{eq97} can not be satisfied. The following proposition tells us that  a real-valued signal $f\in PW_B^2$ can be determined, up to a sign,  from absolute phaseless samples $ |f(x_0 + \frac{n}{2B})|, n\in \Z$ and relative phaseless samples $|f(x_0 + \frac{n+1}{2B}) - f(x_0 + \frac{n}{2B})|, n\in \Z$ if  $f(x_0 + \frac{n}{2B})\ne 0$ for all $n\in \Z$ and some $x_0\in \R$, with twice the Nyquist rate.

\begin{proposition}\label{cpr.real.PW.prop}
Let $f\in PW_B^2$ in \eqref{PW.def} be any nonzero real-valued funcition and $x_0\in \R$ be so chosen  that $f(x_0 + \frac{n}{2B})\ne 0$ for all $n\in \Z$. Then $f$ can be determined, up to a sign, from absolute phaseless samples $ |f(x_0 + \frac{n}{2B})|, n\in \Z$ and relative phaseless samples $|f(x_0 + \frac{n+1}{2B}) - f(x_0 + \frac{n}{2B})|, n\in \Z$. 
\end{proposition}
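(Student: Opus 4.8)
The plan is to pass from $f$ to its sequence of node values $a_n := f(x_0 + \tfrac{n}{2B})$, $n\in\Z$, to show that any competing signal produces the same sequence up to one global sign, and then to reconstruct $f$ from those values via the sampling formula \eqref{pw.rec.for1} (see Lemma~\ref{lem.WSK}). Suppose $g \in PW_B^2$ has the same phaseless data and set $b_n := g(x_0 + \tfrac{n}{2B})$. The hypotheses say $|b_n| = |a_n|$ for all $n$ and $|b_{n+1} - b_n| = |a_{n+1} - a_n|$ for all $n$. Expanding the squared relative magnitudes via $|u - v|^2 = |u|^2 + |v|^2 - 2\Re(u\overline{v})$ and cancelling the absolute terms yields the bilinear identity $\Re(b_{n+1}\overline{b_n}) = \Re(a_{n+1}\overline{a_n})$ for every $n \in \Z$.

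The real, nonvanishing structure of $f$ at the nodes is what does the work. Since $a_n \in \R \setminus \{0\}$ we have $\Re(a_{n+1}\overline{a_n}) = a_{n+1}a_n$, and moreover $|a_{n+1}a_n| = |a_{n+1}|\,|a_n| = |b_{n+1}|\,|b_n| = |b_{n+1}\overline{b_n}|$. Hence $|\Re(b_{n+1}\overline{b_n})| = |b_{n+1}\overline{b_n}|$, which is the equality case of $|\Re z| \le |z|$; this forces $b_{n+1}\overline{b_n} \in \R$, so $b_{n+1}\overline{b_n} = \Re(b_{n+1}\overline{b_n}) = a_{n+1}a_n$. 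As $|b_n| = |a_n| \ne 0$, we may divide: $b_{n+1}/b_n = b_{n+1}\overline{b_n}/|b_n|^2 = a_{n+1}a_n/a_n^2 = a_{n+1}/a_n$, hence $b_{n+1}/a_{n+1} = b_n/a_n$ for all $n$. Therefore $b_n = c\,a_n$ for a constant $c$ with $|c| = |b_n|/|a_n| = 1$.

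Applying \eqref{pw.rec.for1} with base point $x_0$ to both $f$ and $g$ turns $b_n = c\,a_n$ into $g = c f$ on all of $\R$, so $f$ is pinned down up to the unimodular constant $c$, the conjugation ambiguity being vacuous since $\overline f = f$; in particular, among real-valued reconstructions $c$ is real and unimodular, hence $c = \pm 1$, which is the claimed determination up to a sign. The case $f \equiv 0$ is excluded by the hypothesis $f(x_0 + \tfrac{n}{2B}) \ne 0$. I do not expect a real obstacle here; the only delicate point is the equality-case step, where the real, nonzero node values of $f$ are exactly what upgrade the bilinear identity into the rigid identity $b_{n+1}\overline{b_n} = a_{n+1}a_n$ and thereby let the single global sign propagate across $\Z$. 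Note that the graph results of Section~\ref{cpr.graph.sec} do not apply directly, since their non-collinearity requirement fails for real signals and the first-order-difference graph carries no triangles; it is the real structure that makes first-order differences alone suffice.
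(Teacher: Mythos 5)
Your proof is correct and follows essentially the same route as the paper, which obtains this statement as a direct corollary of Proposition~\ref{cpr.real.prop}: there the identity $\Re\bigl(g(x_{n+1})\overline{g(x_n)}\bigr)=f(x_{n+1})f(x_n)$ together with the non-vanishing of $f$ at the nodes propagates a single sign inductively along $\Z$, after which the sampling property (your Lemma~\ref{lem.WSK} step) upgrades nodal equality to $f=\pm g$. The one point where you go slightly beyond the paper is the equality-case argument $|\Re(b_{n+1}\overline{b_n})|=|b_{n+1}\overline{b_n}|$, which explicitly rules out complex-valued competitors $g$ and shows $g=cf$ with $|c|=1$, whereas the paper's ``WLOG $f(x_0)=g(x_0)$'' implicitly restricts to real competitors.
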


The above proposition can directly obtained from Proposition \ref{cpr.real.prop}.

\section{Conjugate phase retrieval in shift-invariant spaces}\label{cpr.sis.sec}

In order to complement existing research on conjugate phase retrieval in shift-invariant spaces generated by Gaussian-type functions or the sinc function, both of which rely on the theory of entire functions of finite order, we consider a broader setting. Specifically, we study conjugate phase retrieval in shift-invariant spaces generated by a real-valued, continuous generator, without relying on any analytic properties of the function. This general framework allows us to go beyond the classical analytic-based methods and address a wider class of shift-invariant spaces.  

 Let $V^p(\phi)$ be the shift-invariant space generated by a real function $\phi$ in \eqref{Vspace.def}. For a function $f\in V^p(\phi)$, if the  available phaseless samples are located on the vertices and edges of the graphs proposed in Theorems \ref{thm.reference} and \ref{PW.adj.thm1}, the feasibility of conjugate phase retrieval of the function  $f$ can be inferred from the structural phaseless samples established in Section \ref{cpr.graph.sec}.  However, these results do not provide an explicit reconstruction procedure for recovering the signal, and the phase propagation is depending on the construction of the ${\mathcal G}_{\bf f}$. To address this, in Section \ref{cpr.sis.sec}, we introduce an alternative approach that not only establishes the theoretical foundation for conjugate phase retrieval in shift-invariant spaces but also leads to a concrete reconstruction algorithm, see Algorithm \ref{alg2}.  This approach is further extended to the Paley-Wiener space, where a corresponding reconstruction procedure is developed, see Algorithm \ref{alg1}.

	A key technical component of our analysis is a geometric condition ensuring that a complex vector in $\C^3$ can be uniquely determined, up to a unimodular constant and conjugation, from its magnitudes and relative magnitudes, see below.

	\begin{proposition}\label{prop2}
	A complex-valued vector $x=(x_1, x_2, x_3)\in \C^3$ can be determined, up to a unimodular constant and conjugation, from absolute magnitudes $ |x_n|, n\in \{1,2,3\} $ and relative magnitudes $  |x_n-x_m|, (n,m)\in \{1,2,3\}^2 $.
	\end{proposition}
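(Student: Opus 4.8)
The plan is to reduce the statement to an identity between Gram matrices and then apply the elementary fact that two tuples of vectors in $\R^2$ with the same Gram matrix differ by an orthogonal transformation. Suppose $y=(y_1,y_2,y_3)\in\C^3$ satisfies $|y_n|=|x_n|$ for $n\in\{1,2,3\}$ and $|y_n-y_m|=|x_n-x_m|$ for all $(n,m)\in\{1,2,3\}^2$. Writing $u_n=(\Re x_n,\Im x_n)^T$ and $v_n=(\Re y_n,\Im y_n)^T$ in $\R^2$, the polarization identity $|x_n-x_m|^2=|x_n|^2+|x_m|^2-2\big(\Re x_n\Re x_m+\Im x_n\Im x_m\big)$ together with $|x_n|=\|u_n\|$ shows that $\langle u_n,u_m\rangle=\langle v_n,v_m\rangle$ for all $n,m\in\{1,2,3\}$ (the off-diagonal entries come from the relative magnitudes, the diagonal ones from the absolute magnitudes). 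Thus the triples $(u_1,u_2,u_3)$ and $(v_1,v_2,v_3)$ have identical Gram matrices.

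Next I would show that this forces $v_n=Ou_n$ for some $O\in\mathrm{O}(2)$. Set $U=\mathrm{span}\{u_1,u_2,u_3\}$ and $V=\mathrm{span}\{v_1,v_2,v_3\}$, and define $T\colon U\to V$ by $T\big(\sum_n c_n u_n\big)=\sum_n c_n v_n$. This is well defined and injective since, for real $c_1,c_2,c_3$, one has $\big\|\sum_n c_n u_n\big\|^2=\sum_{n,m}c_nc_m\langle u_n,u_m\rangle=\sum_{n,m}c_nc_m\langle v_n,v_m\rangle=\big\|\sum_n c_n v_n\big\|^2$; the same computation shows $T$ is a linear isometry of $U$ onto $V$, so $\dim U=\dim V$. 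Extending $T$ by any isometry of $U^\perp$ onto $V^\perp$ (possible, as these subspaces of $\R^2$ have equal dimension) yields $O\in\mathrm{O}(2)$ with $Ou_n=v_n$ for $n\in\{1,2,3\}$. The key point is that this argument never uses affine independence of $u_1,u_2,u_3$, nor even that they are distinct, so the degenerate configurations — collinear points, repeated points, or vanishing entries — are all handled uniformly.

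Finally, identifying $\R^2$ with $\C$, if $\det O=1$ then $O$ is a rotation and $y_n=e^{i\alpha}x_n$ for some $\alpha\in\R$, while if $\det O=-1$ then $O$ factors as a rotation composed with $z\mapsto\bar z$, giving $y_n=e^{i\alpha}\overline{x_n}$; in either case $x$ is determined up to a unimodular constant and conjugation. (This is, in effect, the specialization of Proposition~\ref{complete.prop.jfa} to $d=2$ and the complete graph on three vertices via the isomorphism $M$; I would nonetheless keep the direct Gram-matrix argument because it makes the treatment of the degenerate cases transparent and self-contained.) I do not anticipate a genuine obstacle: the only points needing a little care are the extension of the isometry $T$ when $U$ is a proper subspace of $\R^2$, and the bookkeeping that matches the two connected components of $\mathrm{O}(2)$ with ``multiplication by a unimodular constant'' and ``conjugation followed by such a multiplication.''
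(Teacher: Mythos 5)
Your proof is correct, but it takes a genuinely different route from the paper's. You pass to the Gram matrix of the planar vectors $u_n=(\Re x_n,\Im x_n)^T$ and invoke the standard fact that two tuples in $\R^2$ with identical Gram matrices are related by an element of $\mathrm{O}(2)$ (well-defined isometry on the span, extended arbitrarily to the orthogonal complement), then translate the two components of $\mathrm{O}(2)$ into multiplication by $e^{i\alpha}$ and $z\mapsto e^{i\alpha}\bar z$. This is in effect a self-contained re-derivation of Proposition~\ref{complete.prop.jfa} for $d=2$, which the paper itself acknowledges the result ``aligns with''; its virtue, as you note, is that collinear, repeated, and zero entries require no special treatment. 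The paper instead argues in coordinates with an explicit case split: case (a) when $x$ is real up to a unimodular constant, and case (b) when some pair $x_k,x_l$ is non-collinear, in which case one normalizes $x_1=y_1>0$, deduces $\Re x_2=\Re y_2$ and $\Im x_2=\tau_2\Im y_2$, and recovers $x_3$ by solving a $2\times 2$ linear system whose invertibility is exactly the non-collinearity condition \eqref{eq92}. The reason the authors prefer this less elegant computation is that part (b) is a constructive phase-propagation step that is reused almost verbatim in the proofs of Theorems~\ref{complete.cpr.thm1} and \ref{cpr.adj.thm} and is precisely what Algorithms~\ref{alg2} and \ref{alg1} implement; your Gram-matrix argument establishes the proposition cleanly but does not directly yield that reusable recursion. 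Both proofs are sound; yours buys uniformity and brevity, theirs buys an explicit reconstruction formula.
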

	\begin{proof}
	 If $x=0$, it is trivial. 
	 
	 For a nonzero vector $x$,
		let $ y=(y_1, y_2, y_3)\in \C^3 $ satisfy
		\begin{align}\label{eq66}
			|x_n|=|y_n|,  n\in \{1,2,3\}
		\end{align}
		and
		\begin{align*}
			|x_n-x_m| = |y_n-y_m|,  (n,m)\in  \{1,2,3\}^2.
		\end{align*}
		Then
		\begin{align}\label{eq68}
			\Re x_n\Re x_m + \Im x_n\Im x_m = \Re y_n\Re y_m + \Im y_n\Im y_m,  (n,m)\in  \{1,2,3\}^2.
		\end{align}
		
	We divide the proof into two parts (a) and (b).
		
		(a) If $x$ is real-valued up to a unimodular constant, we may, without loss of generality, assume $x_1\ne 0$ and further assume $x_1=y_1> 0$, otherwise replacing $x$ by $ e^{i\alpha_1}x$ and $y$ by $ e^{i\alpha_2}y$ respectively, where $\alpha_1, \alpha_2 \in \R$. Then $x\in \R^3$, and we can obtain $x_2=y_2$ and $x_3=y_3$ by \eqref{eq68}. Thus, $x=y$, which implies $x$ is determined, up to a unimodular constant.
		
		(b) If $x$ is not real-valued up to a unimodular constant, then there exist $k,l\in \{1,2,3\}$ such that $x_{k}$ and $x_{l}$ are non-collinear, that is $x_{k} \overline{ x_{l}} \ne x_{l} \overline{ x_{k}}$. Without loss of generality, assume $x_{1} \overline{ x_{2}} \ne x_{2} \overline{ x_{1}}$, then $x_1, x_2 \ne 0$ and
		\begin{align}\label{eq92}
			\Re x_1 \Im x_2 \ne \Re x_2 \Im x_1.
		\end{align}
		Without loss of generality, assume $x_1=y_1>0$, otherwise replacing $x$ by $ e^{i\alpha_1}x $ and $y$ by $ e^{i\alpha_2}y $ respectively, where $\alpha_1, \alpha_2\in \R$.
		By \eqref{eq68}, we have $\Re x_2 = \Re y_2$ and then $\Im x_2 = \tau_2 \Im y_2 \ne 0$ for some $\tau_2 =\pm 1 $ by \eqref{eq66} and \eqref{eq92}. By \eqref{eq68}, we have
		\begin{equation*}
			\left\{ \begin{array}{ll}
				
				( \Re x_{3} - \Re y_{3} )  \Re x_{2}
				+ ( \Im x_{3} - \tau_2 \Im y_{3} )  \Im x_{2} {}& = 0 \\
				( \Re x_{3} - \Re y_{3} )  \Re x_{1}
				+ ( \Im x_{3} - \tau_2 \Im y_{3} )  \Im x_{1} {}& = 0
			\end{array}\right.
		\end{equation*}
		Together with \eqref{eq92}, we have $\Re x_{3} = \Re y_{3}$ and $\Im x_{3} = \tau_2 \Im y_{3}$. Thus, we have
		\begin{align*}
			x=y \text{ or } x = \overline{y},
		\end{align*}
		which implies $x$ is determined, up to a unimodular constant and conjugation.
	\end{proof}

 Proposition \ref{prop2} aligns with Proposition \ref{complete.prop.jfa} with  $d=2$.   For the remainder of this discussion, we will primarily leverage part (b) in the proof of Proposition \ref{prop2}, under the condition that the two involved points are non-collinear (meaning they are not aligned with the origin).
 

	First, inspired by the the graph structure with two reference points in Theorem \ref{thm.reference},  we show that conjugate phase retrieval is possible
	for complex-valued signal $f\in V^p(\phi)$, from the absolute magnitude samples  $ |f(x_n)|, n\in \Z $ together with the relative magnitude samples  $|f(x_n)-f(x_q)|, q\in \{k,l\}, n\in \Z $, where  $X=\{x_n\}_{n\in \Z}$ is a set of sampling for $V^p(\phi)$ satisfying \eqref{sampling.def.eq1},  and $k\ne l\in \Z$ are so chosen  that  $f(x_k)$ and $f(x_l)$ are non-collinear, that is, 
	\begin{align}\label{colinear.cond}
		f(x_k)\overline{f(x_l)}\ne f(x_l)\overline{f(x_k)}. 
	\end{align}

	\begin{theorem}\label{complete.cpr.thm1}
		Let $f\in V^p(\phi)$ be a complex-valued function that is not real-valued up to a unimodular constant,  and $X=\{x_n:n\in \Z\}$ be a set of sampling for $V^p(\phi)$ satisfying \eqref{sampling.def.eq1}. Then $f$  can be determined, up to a unimodular constant and conjugation, from absolute magnitude samples $ |f(x_n)|, x_n\in X $ and relative magnitude samples $  |f(x_{n})-f(x_k)|, |f(x_{n})-f(x_l)|, x_n\in X$, where  $k\ne l$ are so chosen that \eqref{colinear.cond} holds.
	\end{theorem}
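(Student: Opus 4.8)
The plan is to transfer the graph-theoretic statement of Theorem~\ref{thm.reference} to the sequence of sample values $(f(x_n))_{n\in\Z}$ and then to upgrade the recovery of these samples to the recovery of $f$ itself, using that a set of sampling is a uniqueness set. So suppose $g\in V^p(\phi)$ produces the same phaseless data as $f$:
\begin{align*}
	|g(x_n)|=|f(x_n)| \quad\text{and}\quad |g(x_n)-g(x_q)|=|f(x_n)-f(x_q)|, \qquad n\in\Z,\ q\in\{k,l\}.
\end{align*}
Expanding the squared relative magnitudes and using $|g(x_n)|=|f(x_n)|$ and $|g(x_q)|=|f(x_q)|$ yields, for all $n\in\Z$ and $q\in\{k,l\}$, the bilinear identities
\begin{align*}
	\Re f(x_n)\,\Re f(x_q)+\Im f(x_n)\,\Im f(x_q)=\Re g(x_n)\,\Re g(x_q)+\Im g(x_n)\,\Im g(x_q),
\end{align*}
which are precisely the relations exploited in the proofs of Proposition~\ref{prop2}(b) and Theorem~\ref{connect.pr.suff.thm1}.

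Next I would introduce the two-reference-point graph $\mathcal G=(V,E)$ with vertex set $V=\Z$ and edge set $E=\{(n,q):n\in\Z,\ q=k,l\}$ --- this is exactly the graph $\mathcal G_1$ of Example~\ref{rem2}(a) --- and view $\mathbf f=(f(x_n))_{n\in\Z}$ as a complex-valued graph signal on it. Every vertex $n\ne k,l$ is contained in the order-$3$ complete subgraph on $\{n,k,l\}$, and $k,l$ are contained in $\{m,k,l\}$ for any $m\ne k,l$; moreover the hypothesis \eqref{colinear.cond} states exactly that the common edge $(k,l)$ of all these triangles is non-collinear in the sense of \eqref{noncollinear.def}. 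Hence Theorem~\ref{thm.reference} applies and shows that $\mathbf f$ is determined, up to a global unimodular constant and conjugation, from the absolute magnitudes $|f(x_n)|$, $n\in\Z$, and the relative magnitudes $|f(x_n)-f(x_q)|$, $(n,q)\in E$. Consequently $(g(x_n))_{n\in\Z}$ equals $e^{i\alpha}(f(x_n))_{n\in\Z}$ or $e^{i\alpha}\overline{(f(x_n))_{n\in\Z}}$ for some $\alpha\in\R$; replacing $g$ by $e^{-i\alpha}g$ or by $e^{-i\alpha}\overline g$ (another element of $V^p(\phi)$ with the same phaseless data) we may assume $g(x_n)=f(x_n)$ for every $n\in\Z$. (Alternatively this step can be carried out directly: after rotating so that $f(x_k)=g(x_k)>0$, the identity at $(n,q)=(l,k)$ together with $|f(x_l)|=|g(x_l)|$ and \eqref{colinear.cond} forces $f(x_l)=g(x_l)$ up to possibly conjugating $g$; and then for each $n$ the two identities at $q=k$ and $q=l$ form a $2\times2$ linear system for $(\Re g(x_n),\Im g(x_n))$ whose matrix is invertible precisely because $f(x_k)$ and $f(x_l)$ are non-collinear, so $g(x_n)=f(x_n)$.)

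It then remains to pass from agreement on the samples to agreement as functions. Since $\phi$ is real-valued, $V^p(\phi)$ is closed under conjugation, complex scalar multiplication and subtraction, so $h:=g-f\in V^p(\phi)$ and $h$ vanishes on $X$. Because $X$ is a set of sampling for $V^p(\phi)$ satisfying \eqref{sampling.def.eq1}, it is a uniqueness set for $V^p(\phi)$, whence $h\equiv 0$ and $g=f$. Undoing the normalization yields $f=e^{i\alpha'}g$ or $f=e^{i\alpha'}\overline g$ for some $\alpha'\in\R$, which is the asserted conclusion.

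I do not expect a substantive obstacle, since the analytic heart of the matter is already contained in Theorem~\ref{thm.reference} (hence ultimately in Proposition~\ref{prop2}) and in the standard fact that a set of sampling is a uniqueness set; the proof is essentially a reduction. The one point that genuinely needs the hypothesis that $f$ is not real-valued up to a unimodular constant is the existence of a reference pair $k\ne l$ satisfying \eqref{colinear.cond}. Indeed, if $f(x_n)$ and $f(x_m)$ were collinear for every $n,m$, all sample values would lie on one line $\{te^{i\beta}:t\in\R\}$ through the origin, so $e^{-i\beta}f-e^{i\beta}\overline f\in V^p(\phi)$ would vanish on $X$ and hence vanish identically, forcing $e^{-i\beta}f$ to be real-valued --- contradicting the hypothesis on $f$.
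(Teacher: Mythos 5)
Your proposal is correct and matches the paper's argument: the parenthetical ``direct'' version (normalize $f(x_k)=g(x_k)>0$, pin down $f(x_l)$ up to conjugation using \eqref{colinear.cond}, then solve the $2\times 2$ system at $q\in\{k,l\}$ for each $n$ as in part (b) of Proposition~\ref{prop2}, and finish with the uniqueness-set property of $X$) is essentially verbatim the paper's proof, and your primary route via Theorem~\ref{thm.reference} is just a repackaging of the same computation through the two-reference-point graph. Your closing observation that the non-real hypothesis guarantees the existence of a non-collinear pair $k\ne l$ is a correct and worthwhile addition, though the theorem as stated presupposes such a pair has been chosen.
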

	\begin{proof}
		Let $g\in V^p(\phi)$ satisfy 
		\begin{align}\label{eq93}
			|f(x_n)| = |g(x_n)|,
		\end{align}
		\begin{align*}
			|f(x_{n}) - f(x_q)| = |g(x_{n}) - g(x_q)|, q\in \{k,l\}
		\end{align*}
		for all $x_n\in X$.
		Then
		\begin{align}\label{eq96}
			\Re f(x_{n}) \Re f(x_{q}) + \Im f(x_{n}) \Im f(x_{q}) = \Re g(x_{n}) \Re g(x_{q}) + \Im g(x_{n}) \Im g(x_{q})
		\end{align}
		for all $x_n\in X$ and $q\in \{k,l\}$.
		By \eqref{colinear.cond}, we have
		\begin{align}\label{eq103}
			\Re f(x_{k}) \Im f(x_{l}) \ne \Re f(x_{l}) \Im f(x_{k}).
		\end{align}
		Obviously, $f$ is nonzero. In fact, we observe $f(x_{k})\ne 0$ and $f(x_{l})\ne 0$ from \eqref{eq103}.
		Without loss of generality, assume $f(x_{k})= g(x_{k}) >0$, otherwise replacing $f$ by $ e^{i\alpha_1}f $ and $g$ by $ e^{i\alpha_2}g $ respectively, where $\alpha_1, \alpha_2\in \R$.
		
		By \eqref{eq96}, we have $\Re f(x_{l}) = \Re g(x_{l})$. Combine this with \eqref{eq93} and \eqref{eq103}, we have
		\begin{align}\label{eq104}
			\Im f(x_{l}) = \tau_l \Im g(x_{l}) \ne 0 \text{ for some } \tau_l = \pm 1.
		\end{align}
		Hence $(f(x_{k}), f(x_{l}))=(g(x_{k}), g(x_{l}))$ or $(f(x_{k}), f(x_{l}))=\overline{(g(x_{k}), g(x_{l}))}$. Following the  similar  arguments in part (b) of the proof of Proposition \ref{prop2}, we have 
		$$\Re (f(x_{n}), f(x_{k}), f(x_{l}) ) = \Re (g(x_{n}), g(x_{k}), g(x_{l}))$$ and $$\Im (f(x_{n}), f(x_{k}), f(x_{l}))= \tau_n \Im (g(x_{n}), g(x_{k}), g(x_{l}))$$ for all $n\in \Z \setminus \{k,l\}$, where $\tau_n=\pm 1$.
        Combine this with \eqref{eq104}, we have $\tau_n=\tau_l$ for all $n\in \Z \setminus \{k,l\}$. Therefore 
		\begin{align*}
			f(x_n)=g(x_n) \text{ for all } x_n\in X \ \ \text{ or }  \ \ f(x_n)=\overline{g(x_n)} \text{ for all } x_n\in X.
		\end{align*}
		As $X$ is a set of sampling for $V^p(\phi)$, we obtain
		\begin{align*}
			f=g \text{ or } f=\overline{g},
		\end{align*}
		which completes  the proof.
	\end{proof}
	
	
	Next, we will explore the circulant graph structure in Theorem \ref{PW.adj.thm1}, and show that the absolute magnitudes and relative magnitudes with respect to the first-order
	neighbors and seconde-order neighbors are sufficient for the conjugate phase retrieval of complex-valued signal $f\in V^p(\phi)$. Specially, in the next theorem, we show that $f\in V^p(\phi)$ can be determined, up to a unimodular constant and conjugation, from absolute magnitude samples $|f(x_n)|,x_n\in X$, and relative magnitude samples $|f(x_{n+1})-f(x_n)|, |f(x_{n+2})-f(x_n)|, x_n\in X$, provided that $X=\{x_n\}_{n\in \Z}$ is a set of sampling for $V^p(\phi)$ satisfying \eqref{sampling.def.eq1} and the function values at adjacent sampling points are non-collinear, that is,
	\begin{align}\label{eq91}
		f(x_{n+1})\overline{f(x_n)} \ne f(x_n) \overline{f(x_{n+1})}
	\end{align}
	for all $x_n\in X$.

\begin{theorem}\label{cpr.adj.thm}
	Let $f\in V^p(\phi)$ be a complex-valued function,  $X=\{x_n\}_{n\in \Z}$ be a set of sampling for $V^p(\phi)$ satisfying \eqref{sampling.def.eq1}  and \eqref{eq91}. Then $f$ can be determined, up to a unimodular constant and conjugation, from absolute magnitude samples $ |f(x_n)|, n\in \Z $ and relative magnitude samples $|f(x_{n+1})-f(x_n)|, |f(x_{n+2})-f(x_n)|, n\in \Z$.
\end{theorem}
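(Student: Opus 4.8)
The plan is to adapt the propagation argument behind Theorem~\ref{complete.cpr.thm1}: in place of two global reference points, I would move along the bi-infinite sampling chain and exploit that every consecutive triple $\{x_{n-1},x_n,x_{n+1}\}$ is a complete subgraph of order $3$ (the edges $(x_{n-1},x_n)$ and $(x_n,x_{n+1})$ come from $q=1$, the edge $(x_{n-1},x_{n+1})$ from $q=2$). So let $g\in V^p(\phi)$ satisfy $|g(x_n)|=|f(x_n)|$, $|g(x_{n+1})-g(x_n)|=|f(x_{n+1})-f(x_n)|$ and $|g(x_{n+2})-g(x_n)|=|f(x_{n+2})-f(x_n)|$ for all $n\in\Z$. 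Squaring each relative-magnitude identity and cancelling the equal squared moduli yields the polarization identities
\begin{align*}
\Re f(x_n)\Re f(x_m)+\Im f(x_n)\Im f(x_m)=\Re g(x_n)\Re g(x_m)+\Im g(x_n)\Im g(x_m)
\end{align*}
for all $n,m$ with $|n-m|\in\{1,2\}$. Condition~\eqref{eq91} says precisely that $\Re f(x_{n-1})\Im f(x_n)\ne\Re f(x_n)\Im f(x_{n-1})$ for every $n$, which in particular forces $f(x_n)\ne 0$ for all $n$; multiplying $f$ by $e^{i\alpha_1}$ and $g$ by $e^{i\alpha_2}$ we normalize so that $f(x_0)=g(x_0)>0$.

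Next I would run an induction in both directions from $x_0$. For the base case, the $q=1$ identity on $(x_0,x_1)$ with $f(x_0)=g(x_0)$ real gives $\Re f(x_1)=\Re g(x_1)$, and $|f(x_1)|=|g(x_1)|$ then forces $\Im f(x_1)=\tau_1\Im g(x_1)$ for some $\tau_1\in\{-1,1\}$; non-collinearity of $(x_0,x_1)$ gives $\Im f(x_1)\ne 0$, and after possibly replacing $g$ by $\overline g$ (which preserves $g(x_0)>0$ and all the constraints) we may take $\tau_1=1$, so $f(x_1)=g(x_1)$. For the inductive step, assuming $f(x_j)=g(x_j)$ for $0\le j\le n$, the $q=2$ identity on $(x_{n-1},x_{n+1})$ together with the $q=1$ identity on $(x_n,x_{n+1})$, after subtracting their $g$-counterparts, become a homogeneous $2\times 2$ linear system for $(\Re f(x_{n+1})-\Re g(x_{n+1}),\ \Im f(x_{n+1})-\Im g(x_{n+1}))$ whose determinant equals $\Re f(x_{n-1})\Im f(x_n)-\Re f(x_n)\Im f(x_{n-1})\ne 0$; hence $f(x_{n+1})=g(x_{n+1})$, and symmetrically $f(x_{-n})=g(x_{-n})$. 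This gives $f(x_n)=g(x_n)$ for all $n\in\Z$ (or $f(x_n)=\overline{g(x_n)}$ for all $n$ in the conjugated branch), and since a set of sampling for $V^p(\phi)$ is a uniqueness set, $f=g$ or $f=\overline g$; undoing the normalization yields $f=e^{i\alpha}g$ or $f=e^{i\alpha}\overline g$ for some $\alpha\in\R$.

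The step I expect to carry the real weight is the \emph{global} consistency of the conjugation ambiguity: a priori each triple $\{x_{n-1},x_n,x_{n+1}\}$ determines $f(x_{n+1})$ only up to an independent sign $\tau_{n+1}=\pm1$, and the crux is that consecutive triples overlap in the non-collinear edge $(x_n,x_{n+1})$, so once the sign is pinned at $(x_0,x_1)$ it can no longer flip — this is exactly what makes the determinant in the inductive step nonvanishing, and is where \eqref{eq91} is genuinely used; the second-order neighbours ($q=2$) enter only to close the triples into complete subgraphs so that the determinant argument applies. Alternatively, one could deduce the statement by applying Theorem~\ref{PW.adj.thm1} to the graph signal $(f(x_n))_{n\in\Z}$ on the $Q=\{1,2\}$ circulant-type graph and then invoking the uniqueness-set property of $X$, but the constructive argument above is the one we turn into the reconstruction procedure in Algorithm~\ref{alg2}.
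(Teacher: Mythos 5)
Your proposal is correct and follows essentially the same route as the paper's proof: polarization of the first- and second-order relative magnitudes, normalization at $x_0$, and a two-sided induction in which each new value solves a homogeneous $2\times 2$ system whose determinant is nonzero by the non-collinearity condition \eqref{eq91}, followed by the uniqueness-set property of $X$. The only cosmetic difference is that you absorb the conjugation ambiguity by replacing $g$ with $\overline{g}$ at the base step, whereas the paper carries the sign $\tau_1$ through the induction.
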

\begin{proof}
	Let $g\in V^p(\phi)$ satisfy 
	\begin{align}\label{eq24}
		|f(x_n)| = |g(x_n)|, 
	\end{align}
	\begin{align*}
		|f(x_{n+1}) - f(x_n)| = |g(x_{n+1}) - g(x_n)|
	\end{align*}
	and
	\begin{align*}
		|f(x_{n+2}) - f(x_n)| = |g(x_{n+2}) - g(x_n)|
	\end{align*}
	for all $n\in \Z$.  
	Then
	\begin{align}\label{eq28}
		\Re f(x_{n+1})   \Re f(x_{n})    +  \Im f(x_{n+1})  \Im f(x_{n}) 
		=  \Re g(x_{n+1})   \Re g(x_{n}  )  +  \Im g(x_{n+1})   \Im g(x_{n}) 
	\end{align}
	and
	\begin{align}\label{eq29}
		\Re f(x_{n+2})   \Re f(x_{n})    +  \Im f(x_{n+2})  \Im f(x_{n}) 
		=  \Re g(x_{n+2})   \Re g(x_{n}  )  +  \Im g(x_{n+2})   \Im g(x_{n}) 
	\end{align}
	for all $n\in \Z$.
	By \eqref{eq91}, we have
	\begin{align}\label{eq73}
		\Re f(x_{n+1}) \Im f(x_{n}) \ne \Re f(x_{n}) \Im f(x_{n+1})
	\end{align}
	for all $n\in \Z$.
	Then we know that $f(x_n) \ne 0$ for all $n\in \Z$. Without loss of generality, we assume $f(x_0)=g(x_0)>0$, otherwise replacing $f$ by $e^{i\alpha_1}f$ and $g$ by $e^{i\alpha_2}g$ respectively, where $\alpha_1, \alpha_2\in \R$.
	Obviously, $\Re f(x_0) = \Re g(x_0) > 0$ and $ \Im f(x_0) = \Im g(x_0) =0$. By \eqref{eq24}, \eqref{eq28} and \eqref{eq73}, we have $\Re f(x_1) =  \Re g(x_1)$ and $ \Im f(x_1)= \tau_1 \Im g(x_1) \ne 0$ for some $\tau_1= \pm 1$. Following the similar arguments in part (b) of the proof of Proposition \ref{prop2}, we have
	\begin{align*}
		\Re\big(f(x_{0}), f(x_{1}), f(x_{2})\big) = \Re\big (g(x_{0}), g(x_{1}), g(x_{2})\big)
	\end{align*}
	and
	\begin{align*}
		\Im \big(f(x_{0}), f(x_{1}), f(x_{2}))= \tau_1 \Im (g(x_{0}), g(x_{1}), g(x_{2})\big).
	\end{align*}
	Thus, $(f(x_0), f(x_1), f(x_2))$ is determined, up to a conjugation.
	
	Next we will prove that $(f(x_n))_{n\in \Z}$ is determined, up to a conjugation, 
	by induction.
	Inductively, we assume
	\begin{align*}
		\Re f(x_n) = \Re g(x_n), \Im f(x_n) = \tau_1 \Im  g(x_n),  n\in [0,n_1]\cap \Z,
	\end{align*}
	where $n_1\ge 2$.
	By \eqref{eq28} and \eqref{eq29},  we have
	\begin{equation*}
		\left\{ \hskip .2in \begin{array}{ll}
			\hskip-.2in
			\big( \Re f(  x_{n_1+1}  )  -  \Re g(  x_{n_1+1}  ) \big)  \Re f(x_{n_1}) + \big(  \Im f(  x_{n_1+1}  )  -  \tau_1 \Im g(  x_{n_1+1}) \big)  \Im f(x_{n_1}  ) {}& \hskip-.10in =  0 \\
			\hskip-.2in
			\big( \Re f(  x_{n_1+1}  )  -  \Re g(  x_{n_1+1}  ) \big)  \Re f(x_{n_1-1}  ) 
			+   \big( \Im  f(  x_{n_1+1}  )  -  \tau_1 \Im g(  x_{n_1+1}  )  \big)  \Im  f(  x_{n_1-1}  ) {}& \hskip-.10in =   0	
		\end{array}\right.
	\end{equation*}
	Together with (\ref{eq73}), we have $\Re f(x_{n_1+1}) = \Re g(x_{n_1+1}) $ and $\Im  f(x_{n_1+1}) = \tau_1 \Im g(x_{n_1+1})$. 
	Therefore, we have 
	\begin{align}\label{cpr.eq.1a}
		\Re f(x_n) = \Re g(x_n) \ \ {\rm and} \ \ \Im f(x_n) = \tau_1 \Im  g(x_n)
	\end{align}
	for all $n\in [0,\infty)\cap \Z$. Following the similar arguments above, we  have \eqref{cpr.eq.1a}  holds for all $n\in (-\infty, 0)\cap \Z$. Therefore 
	\begin{align*}
		f(x_n)=g(x_n) \text{ for all } x_n\in X  \ \ \text{ or }  \ \ f(x_n)=\overline{g(x_n)} \text{ for all } x_n\in X.
	\end{align*}
	As $X$ is a set of sampling for $V^p(\phi)$, we obtain
	\begin{align*}
		f=g \text{ or } f=\overline{g},
	\end{align*}
	which completes the proof.
\end{proof}

If $f\in V^p(\phi)$ in \eqref{Vspace.def} is real-valued or real-valued up to a rotation, then \eqref{eq91} can never hold. The following proposition tells us that  a real-valued signal $f\in V^p(\phi)$ can be determined, up to a sign,  from absolute magnitude samples $ |f(x_n)|, x_n\in X$ and relative magnitude samples $|f(x_{n+1})-f(x_n)|, x_n\in X$, provided that  $X=\{x_n\}_{n\in \Z} \subset \R$ is a set of sampling for $V^p(\phi)$ satisfying \eqref{sampling.def.eq1} and   $f(x_n)\ne 0$ for all $x_n\in X$.

\begin{proposition}\label{cpr.real.prop}
	Let $f\in V^p(\phi)$ be any nonzero real-valued function, $X=\{x_n\}_{n\in \Z} \subset \R$ be a set of sampling for $V^p(\phi)$ satisfying \eqref{sampling.def.eq1} and   $f(x_n)\ne 0$ for all $x_n\in X$. Then $f$ can be determined, up to a sign, from absolute magnitude samples $ |f(x_n)|, x_n\in X$ and relative magnitude samples $|f(x_{n+1})-f(x_n)|, x_n\in X$. 
\end{proposition}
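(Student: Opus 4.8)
The plan is to follow the same strategy as part~(a) of the proof of Proposition~\ref{prop2}, but with the sign information propagated along the sampling set $X$ by induction, exactly as the sign propagation is carried out in the proof of Theorem~\ref{cpr.adj.thm}. Concretely, I would start from an arbitrary $g\in V^p(\phi)$ with $|g(x_n)|=|f(x_n)|$ for all $n\in\Z$ and $|g(x_{n+1})-g(x_n)|=|f(x_{n+1})-f(x_n)|$ for all $n\in\Z$, and show that $g$ agrees with $f$ up to a unimodular constant; since $f$ (and the real-valued reconstruction we seek) is real-valued, this is the asserted ambiguity ``up to a sign''. Expanding the relative--magnitude identities and cancelling the diagonal terms $|g(x_{n+1})|^2+|g(x_n)|^2$ against $|f(x_{n+1})|^2+|f(x_n)|^2$, and using $\Im f(x_n)=0$, one obtains
\begin{align}\label{eq.realpf.1}
	\Re g(x_{n+1})\,\Re g(x_n)+\Im g(x_{n+1})\,\Im g(x_n)=f(x_{n+1})\,f(x_n),\qquad n\in\Z.
\end{align}

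The crucial step is to deduce that all samples $g(x_n)$, $n\in\Z$, lie on one line through the origin of $\C$. The left-hand side of \eqref{eq.realpf.1} equals $\Re\big(g(x_{n+1})\overline{g(x_n)}\big)$, and its absolute value is at most $|g(x_{n+1})|\,|g(x_n)|=|f(x_{n+1})|\,|f(x_n)|=|f(x_{n+1})f(x_n)|$; thus \eqref{eq.realpf.1} forces equality in the bound $|\Re(z\overline w)|\le|z|\,|w|$. Because $f(x_n)\ne0$ for all $n$, the hypothesis $|g(x_n)|=|f(x_n)|$ gives $g(x_n)\ne0$, so equality in that bound forces $g(x_{n+1})\overline{g(x_n)}$ to be a nonzero real number for every $n$. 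Writing $g(x_0)=|g(x_0)|e^{i\theta}$, a short induction (upward and downward in $n$) then yields $g(x_n)=e^{i\theta}s_n$ with $s_n\in\R\setminus\{0\}$ and $|s_n|=|f(x_n)|$. Replacing $g$ by $e^{-i\theta}g$, we may assume $g(x_n)=s_n\in\R$ for all $n$, and then \eqref{eq.realpf.1} reads $s_{n+1}s_n=f(x_{n+1})f(x_n)$.

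To finish, set $\varepsilon_n:=s_n/f(x_n)\in\{\pm1\}$ (well defined since $|s_n|=|f(x_n)|$ and $f(x_n)\ne0$); the relation $s_{n+1}s_n=f(x_{n+1})f(x_n)$ together with $f(x_{n+1})f(x_n)\ne0$ gives $\varepsilon_{n+1}\varepsilon_n=1$, hence $\varepsilon_{n+1}=\varepsilon_n$, so there is a single $\varepsilon\in\{\pm1\}$ with $g(x_n)=\varepsilon f(x_n)$ for every $x_n\in X$. Since $X$ is a set of sampling for $V^p(\phi)$ satisfying \eqref{sampling.def.eq1}, it is a uniqueness set, so $g=\varepsilon f$; undoing the normalizations shows the original $g$ equals $f$ up to a unimodular constant, i.e. up to a sign when $g$ is real-valued. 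I expect the collinearity step --- recognizing \eqref{eq.realpf.1} as an equality case of $|\Re(z\overline w)|\le|z|\,|w|$ and propagating it to a single common phase $\theta$ --- to be the only genuinely delicate point; the remaining sign-propagation induction and the passage from pointwise equality on $X$ to $g=\pm f$ are routine and parallel the arguments already given for Theorem~\ref{cpr.adj.thm}.
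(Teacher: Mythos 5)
Your argument is correct, and it is in essence the same telescoping-plus-induction scheme the paper uses: cancel the diagonal terms in $|g(x_{n+1})-g(x_n)|^2=|f(x_{n+1})-f(x_n)|^2$ to get a cross-term identity, then propagate agreement along the chain $\{x_n\}$ using $f(x_n)\ne 0$, and finish with the uniqueness property of the set of sampling. The one substantive difference is that you allow the competitor $g$ to be an arbitrary (possibly complex-valued) element of $V^p(\phi)$ and first force collinearity of all samples $g(x_n)$ via the equality case of $|\Re(z\overline{w})|\le |z|\,|w|$, concluding $g=e^{i\theta}\varepsilon f$. The paper's proof tacitly restricts to real-valued $g$: its opening normalization ``assume $f(x_0)=g(x_0)$, otherwise replace $f$ by $-f$'' only makes sense when $g(x_0)=\pm f(x_0)$, which follows from $|g(x_0)|=|f(x_0)|$ only for real $g(x_0)$. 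So your version buys a slightly stronger statement (the only complex-valued competitors consistent with the data are unimodular multiples of $f$) at the cost of one extra, but entirely elementary, Cauchy--Schwarz equality step; if the proposition is read as comparing only real-valued candidates, the two proofs coincide. All the individual steps --- the phase propagation $g(x_{n+1})\overline{g(x_n)}\in\R\setminus\{0\}$, the sign relation $\varepsilon_{n+1}\varepsilon_n=1$, and the passage from equality on $X$ to equality in $V^p(\phi)$ via \eqref{sampling.def.eq1} --- check out.
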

\begin{proof}
	Let $g\in V^p(\phi)$ satisfy \begin{equation}\label{p.eq1}
		|f(x_n)|=|g(x_n)|
	\end{equation}
	and
	\begin{equation}\label{p.eq2}
		|f(x_{n+1})-f(x_n)|=|g(x_{n+1})-g(x_n)|
	\end{equation}
	for all $x_n\in X$. Without loss of generality, we assume $f(x_0)=g(x_0)$, otherwise replacing $f$ by $-f$. 
	As $f(x_n)\neq 0, n\in \Z$, we have $f(x_{1})=g(x_{1})$ by \eqref{p.eq1} and \eqref{p.eq2}. Inductively, we assume that 
	$(f(x_0), f(x_{1}), \ldots, f(x_{n_1}))=(g(x_0), g(x_{1}), \ldots, g(x_{n_1}))$ for some $n_1\in [1,\infty)\cap \Z$. By \eqref{p.eq1} and \eqref{p.eq2}, we have $f(x_{n_1+1})=g(x_{n_1+1})$. 
	By induction, we have $f(x_n)=g(x_n)$ for all $n\in [0,\infty)\cap \Z$. Following the similar arguments above, we have $f(x_n)=g(x_n)$ for all $n\in (-\infty,0)\cap \Z$. Therefore, $f(x_n)=g(x_n)$ for all $x_n\in X$. As $X$ is a set of sampling for $V^p(\phi)$ satisfying \eqref{sampling.def.eq1}, we have 
	\begin{align*}
		f=g,
	\end{align*}
	which completes the proof.
\end{proof}

To conclude this section, we present a reconstruction algorithm inspired by the proof of Theorem \ref{complete.cpr.thm1}, see Algorithm \ref{alg2}. In addition, we provide a reconstruction algorithm for conjugate phase retrieval of functions in the Paley-Wiener space, given in Algorithm \ref{alg1}. We emphasize that the design of Algorithm \ref{alg1} is based on the proof of Theorem \ref{cpr.adj.thm}, which does not rely on the analytic properties of the underlying function.

\begin{algorithm}[!ht]
	\renewcommand{\algorithmicrequire}{\textbf{Input:}}
	\renewcommand{\algorithmicensure}{\textbf{Output:}}
	\caption{Reconstruct $\{f(x_n)\}_{n\in \Z}$ in $V^p(\phi)$ from structured phaseless samples, up to a unimodular constant and conjugation}\label{alg2}
	
	\begin{algorithmic}[1]
		\State{\bf Inputs}: Suppose $\{x_n\}_{n\in \Z}$ is a set of sampling for $V^p(\phi)$ satisfying \eqref{sampling.def.eq1} and $k\ne l\in \Z$ satisfies \eqref{colinear.cond}; Write $f(x_n)$ by $f_n, n\in \Z$; Phaseless samples $|f_n| \cup |f_{n} - f_k| \cup |f_{n} - f_l|$ for all $n\in \Z$.
		\State {\bf Initials}: Choose $g_k = |f_k|$, $ g_l= \frac{|f_l-f_k|^2 - |f_l|^2 - |f_k|^2}{-2{ |f_k|}} + i \sqrt{|f_l|^2 - (\frac{|f_l-f_k|^2 - |f_l|^2 - |f_k|^2}{-2{ |f_k|}})^2} $, up to a unimodular constant and conjugation.
		
		\State \textbf{For} $n\ne k,l$ \textbf{do}
		
		Determine $g_{n}$: 
		\begin{align*}
			\begin{pmatrix}
				\Re g_{n} \\
				\Im g_{n}
			\end{pmatrix}
			= \frac{1}{-2}  \begin{pmatrix}
				0  & 1/g_k \\
				1/\Im g_l & - \Re g_l / (g_k \Im g_l)
			\end{pmatrix}
			\begin{pmatrix}
				{|f_{n} - f_{l}|^2 - |f_{n}|^2 - |f_{l}|^2} \\
				{|f_{n} - f_{k}|^2 - |f_{n}|^2 - |f_{k}|^2}
			\end{pmatrix}.
		\end{align*} 
		\textbf{end}
		\State {\bf Output}: $f_n=g_n$ for all $n\in \Z$.
	\end{algorithmic}
\end{algorithm}

\begin{algorithm}[!ht]
\renewcommand{\algorithmicrequire}{\textbf{Input:}}
\renewcommand{\algorithmicensure}{\textbf{Output:}}
\caption{Reconstruct $f$ in $PW_B^2$ from structured phaseless samples, up to a unimodular constant and conjugation}\label{alg1}
\begin{algorithmic}[1]
	\State{\bf Inputs}: Suppose $x_0$ satisfies \eqref{eq97}; Write $f(x_0+\frac{n}{2B})$ by $f_n, n\in \Z$; Phaseless samples $|f_n| \cup |f_{n+1} - f_n| \cup |f_{n+2} - f_n|$ for all $n\in \Z$.
	\State {\bf Initials}: Choose $g_0 = |f_0|$, $ g_1= \frac{|f_1-f_0|^2 - |f_1|^2 - |f_0|^2}{-2{|f_0|}} + i \sqrt{|f_1|^2 - (\frac{|f_1-f_0|^2 - |f_1|^2 - |f_0|^2}{-2{|f_0|}})^2}$, up to a unimodular constant and conjugation.
	
	\State \textbf{For} $n\ge 2$ \textbf{do}
	
	Determine $g_{n}$: 
	\begin{align*}
		\begin{pmatrix}
			\Re g_{n} \\
			\Im g_{n}
		\end{pmatrix}
		= \frac{1}{-2}  \begin{pmatrix}
			\Re g_{n-1} & \Im g_{n-1} \\
			\Re g_{n-2} & \Im g_{n-2}
		\end{pmatrix}^{-1}
		\begin{pmatrix}
			{|f_{n} - f_{n-1}|^2 - |f_{n}|^2 - |f_{n-1}|^2} \\
			{|f_{n} - f_{n-2}|^2 - |f_{n}|^2 - |f_{n-2}|^2}
		\end{pmatrix}.
	\end{align*}
	\textbf{end}
	\State \textbf{For} $n\le -1$ \textbf{do}
	
	Determine $g_{n}$:
	\begin{align*}
		\begin{pmatrix}
			\Re g_{n} \\
			\Im g_{n}
		\end{pmatrix} 
		= \frac{1}{-2} \begin{pmatrix}
			\Re g_{n+1} & \Im g_{n+1} \\
			\Re g_{n+2} & \Im g_{n+2}
		\end{pmatrix}^{-1}
		\begin{pmatrix}
			{|f_{n} - f_{n+1}|^2 - |f_{n}|^2 - |f_{n+1}|^2} \\
			{|f_{n} - f_{n+2}|^2 - |f_{n}|^2 - |f_{n+2}|^2}
		\end{pmatrix}.
	\end{align*}
	\textbf{end}
	\State {\bf Output}: $f=\sum_{n\in \Z} g_n {\rm sinc}(2B(\cdot - x_0) - n)$.
\end{algorithmic}

\end{algorithm}

\section{STFT conjugate phase retrieval in complex Paley-Wiener space}\label{stft.pw.sec}

The \textit{short-time Fourier transform (STFT)}  is a fundamental tool in time-frequency analysis for a function $f\in L^2(\R)$ with  a given window function  $\psi\in L^2(\R)$, defined as
\begin{align*}
	\mathcal{V}_\psi f(x,w):= \int_{\R} f(t)\overline{\psi(t-x)}e^{-2\pi itw}dt, \  x,w\in \R. 
\end{align*} 
Our interest lies in recovering a function from STFT magnitudes sampled only in the time domain, that is the frequency component $\omega=0$. In this case, the STFT simplifies to the convolution of the function $f$ with the reflected conjugate of the window, that is, 
 $$ \mathcal{V}_{\psi} f(x,0) = \int_{\R} f(t)\overline{\psi(t-x)} dt= f\ast \psi^\# (x)$$ 
where $\psi^\#(x) = \overline{\psi(-x)}$. 
From measurements of the form $\{|V_{\psi}f(x, 0)|: x\in X\}$ with a real-valued window function $\psi$, $e^{i\alpha_1} f$ and $e^{i\alpha_2} \overline{f}$ are not distinguishable, where $f$ is the complex-valued bandlimited function,  $X$ is a discrete sampling set, and $\alpha_1, \alpha_2\in \R$.   Analogous  to the conjugate phase retrieval of bandlimited function in Section \ref{cpr.pw.sec}, we explore the   conjugate phase retrieval of complex-valued bandlimited functions from   the structured  STFT magnitudes only measured in the time domain.




In \cite{alaifari2021uniqueness}, the authors proved that a real-valued function in $PW_B^2$ can be determined, up to a sign, from its phaseless STFT samples in the time domain using one window function, with a sampling rate twice the Nyquist rate.
In the following theorem, we show that a complex-valued function  in $PW_B^2$ can be determined, up to a unimodular constant and conjugation, from its phaseless STFT samples in the time domain using two structured window functions, with a sampling rate four times the Nyquist rate, which is analogous to Theorem \ref{thm2}.


\begin{theorem}\label{thm4}
	Let $B>0, 0<c\le \frac{1}{2B}$, the Paley-Wiener space $PW_B^2$ be defined as in \eqref{PW.def} and $\psi\in L^2(\R)$ be a real-valued window such that
	\begin{equation}\label{eq03}
		(\mathcal{F} \psi) (w) \ne 0 ,  \ \ a.e. \ \  w \in (-B,B).
	\end{equation}
	Then any complex-valued function $f\in PW_B^2$ can be determined, up to a unimodular constant and conjugation, from $ |\mathcal{V}_{\psi} f (x_0+\frac{n}{4B},0)|, n\in \Z $ and $ |\mathcal{V}_{\psi(\cdot-c)-\psi(\cdot)} f (x_0+\frac{n}{4B},0)|, n\in \Z  $, where $x_0\in \R$ is any constant.
\end{theorem}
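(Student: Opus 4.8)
The plan is to reduce the zero--frequency STFT problem to the convolution--type conjugate phase retrieval already supplied by Lemma~\ref{lem2}. First I would set $F:=\mathcal{V}_\psi f(\cdot,0)$. Since $\psi$ is real-valued, $\mathcal{V}_\psi f(x,0)=\int_\R f(t)\psi(t-x)\,dt=(f\ast\psi^\#)(x)$ with $\psi^\#(x)=\psi(-x)$, so $\mathcal{F}F=\mathcal{F}f\cdot\overline{\mathcal{F}\psi}$, which is again supported in $[-B,B]$; hence $F$ is bandlimited to $[-B,B]$, and $F\not\equiv 0$ whenever $f\not\equiv 0$ because $\mathcal{F}\psi\ne 0$ a.e.\ on $(-B,B)$ by \eqref{eq03}. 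I would then record two identities, both consequences of $\psi$ being real: a change of variables gives $\mathcal{V}_{\psi(\cdot-c)}f(x,0)=\mathcal{V}_\psi f(x+c,0)$, so that
\[
\mathcal{V}_{\psi(\cdot-c)-\psi}f(x,0)=F(x+c)-F(x),\qquad x\in\R;
\]
and conjugating under the integral sign yields $\mathcal{V}_\psi(\overline f)(\cdot,0)=\overline{F}$.

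With this notation, the available data are exactly $|F(x_0+\tfrac{n}{4B})|$ and $|F(x_0+\tfrac{n}{4B}+c)-F(x_0+\tfrac{n}{4B})|$, $n\in\Z$. Next I would argue, exactly as in the proof of Theorem~\ref{thm2}, that $|F|^2$ and $|F(\cdot+c)-F(\cdot)|^2$ are bandlimited to $[-2B,2B]$ (modulation does not enlarge the spectrum), hence lie in $PW_{2B}^1\subset PW_{2B}^2$; since $\tfrac{1}{4B}$ is the Nyquist spacing for $PW_{2B}$, the WSK reconstruction \eqref{pw.rec.for1} recovers $x\mapsto|F(x)|$ and $x\mapsto|F(x+c)-F(x)|$ on all of $\R$. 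Because $0<c\le\tfrac1{2B}$, Lemma~\ref{lem2} then applies to $F$ and shows that $F$ is determined, up to a unimodular constant and conjugation.

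Finally I would transfer this ambiguity from $F$ back to $f$. If $g\in PW_B^2$ yields the same phaseless STFT samples and $G:=\mathcal{V}_\psi g(\cdot,0)$, then by the previous paragraphs $|G(x)|=|F(x)|$ and $|G(x+c)-G(x)|=|F(x+c)-F(x)|$ for every $x$, so Lemma~\ref{lem2} forces $G=e^{i\alpha}F$ or $G=e^{i\alpha}\overline{F}$ for some $\alpha\in\R$. Taking Fourier transforms, the first case reads $\mathcal{F}g\cdot\overline{\mathcal{F}\psi}=e^{i\alpha}\,\mathcal{F}f\cdot\overline{\mathcal{F}\psi}$, and the second, using $\overline{F}=\mathcal{V}_\psi(\overline f)(\cdot,0)$, reads $\mathcal{F}g\cdot\overline{\mathcal{F}\psi}=e^{i\alpha}\,\mathcal{F}(\overline f)\cdot\overline{\mathcal{F}\psi}$. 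In either case both sides are supported in $[-B,B]$ and $\overline{\mathcal{F}\psi}\ne 0$ a.e.\ on $(-B,B)$, so cancelling $\overline{\mathcal{F}\psi}$ yields $g=e^{i\alpha}f$ or $g=e^{i\alpha}\overline f$, which is the assertion.

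I expect the delicate point to be this last transfer step, and it is precisely where both hypotheses on $\psi$ are genuinely used: realness of $\psi$ makes $\mathcal{V}_\psi(\cdot,0)$ commute with conjugation, so the conjugation ambiguity for the auxiliary bandlimited function $F$ is \emph{exactly} the conjugation ambiguity for $f$ and nothing spurious; and $\mathcal{F}\psi\ne 0$ a.e.\ on $(-B,B)$ makes $f\mapsto F$ injective on $PW_B^2$ and invertible by pointwise division in frequency. The only routine technicality is to confirm that $F=\mathcal{V}_\psi f(\cdot,0)$ lies in a Paley--Wiener class for which Lemma~\ref{lem2} and the WSK theorem are applicable; this rests on the compact spectral support of $f$ together with integrability properties of the window $\psi$.
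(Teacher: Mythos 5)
Your argument is correct and follows essentially the same route as the paper: reduce to the bandlimited function $F=f\ast\psi^\#$, recover $|F|$ and $|F(\cdot+c)-F|$ on all of $\R$ via the WSK theorem at spacing $\tfrac1{4B}$ (this is exactly the content of Theorem~\ref{thm2}, which the paper invokes directly rather than re-deriving), apply Lemma~\ref{lem2}, and then cancel $\mathcal{F}\psi^\#$ a.e.\ on $(-B,B)$ using \eqref{eq03} and the realness of $\psi$ to transfer the unimodular/conjugation ambiguity back to $f$. No gaps.
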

\begin{proof}
	Note that $\mathcal{V}_{\psi} f(x,0) = f\ast \psi^\# (x)$ and $\mathcal{V}_{\psi(\cdot-c)-\psi(\cdot)} f(x,0) = f\ast \psi^\# (x+c) - f\ast \psi^\# (x)$ for any $x,c\in \R$.
	Let $g\in PW_B^2$ satisfy
	
	\begin{align}\label{eq01}
		|f\ast \psi^\# (x_0 + \frac{n}{4B})| = |g\ast \psi^\# (x_0+\frac{n}{4B})|
	\end{align}
	and
	\begin{align}\label{eq02}
		|f\ast \psi^\# (x_0 \hskip-.03in + \hskip-.03in \frac{n}{4B} \hskip-.03in + \hskip-.03in c) \hskip-.03in - \hskip-.03in f\ast \psi^\# (x_0 \hskip-.03in + \hskip-.03in \frac{n}{4B})| \hskip-.03in = \hskip-.03in |g\ast \psi^\# (x_0 \hskip-.03in + \hskip-.03in \frac{n}{4B} \hskip-.03in + \hskip-.03in c) \hskip-.03in - \hskip-.03in g\ast \psi^\# (x_0 \hskip-.03in + \hskip-.03in \frac{n}{4B})|
	\end{align}
	for all $n\in \Z$.
	If $f$ is a  zero signal, it is trivial. Assume $f$ is a nonzero signal. 
	

	By the convolution theorem, we have $\mathcal{F}(f\ast \psi^\#) = (\mathcal{F}f) (\mathcal{F}\psi^\#) \in L^1([-B,B])$, which implies $f\ast \psi^\# \in PW_B^1 \subset PW_B^2$.  Combine this with \eqref{eq01}, \eqref{eq02} and Theorem \ref{thm2}, we have
	\begin{align*}
		f\ast \psi^\# = e^{i\alpha} g\ast \psi^\# \text{ or } f\ast \psi^\# = e^{i\alpha} \overline{g\ast \psi^\#}, \text{ for some } \alpha\in \R.
	\end{align*}

	If $f\ast \psi^\# = e^{i\alpha} g\ast \psi^\#$ for some $\alpha\in \R$, then $\mathcal{F}f \mathcal{F}\psi^\# = e^{i\alpha} \mathcal{F}g \mathcal{F}\psi^\#$. Since $\mathcal{F} \psi^\# (w) = \mathcal{F}\psi (-w)$, then it follows from \eqref{eq03} that $\mathcal{F}f(w) = e^{i\alpha} \mathcal{F}g(w), a.e. w\in \R$ and thus $f=e^{i\alpha} g$. 
	If $f\ast \psi^\# = e^{i\alpha} \overline{g\ast \psi^\#}$ for some $\alpha\in \R$. Similarly, we can obtain $f= e^{i\alpha} \overline{g}$ for some $\alpha\in \R$. Then we complete the proof.
\end{proof}

We observe that the result in Theorem \ref{thm4} is merely a theoretical sampling result and does not correspond to a reconstruction method. Next  we 
explore the circulant graph structure in Theorem  \ref{PW.adj.thm1} and
propose a sampling scheme analogous to Theorem \ref{PW.adj.thm2} to achieve STFT conjugate phase retrieval in $PW_B^2$ using three structured window functions, which corresponds to a reconstruction method, with a sampling rate three times the Nyquist rate.

\begin{theorem}\label{thm6}
	Let $B>0$, the Paley-Wiener space $PW_B^2$ be defined as in (\ref{PW.def}), $\psi\in L^2(\R)$ be a real-valued window function such that
	\begin{align*}
		(\mathcal{F} \psi) (w) \ne 0 , \  \ a.e. \  \ w \in (-B,B),
	\end{align*}
	and let $f$ be any nonzero complex-valued function in $PW_B^2$ and $x_0\in \R$ satisfy
	\begin{align}\label{eq50}
		\mathcal{V}_{\psi} f (x_0 \hskip-.03in + \hskip-.03in \frac{n+1}{2B},0) \overline{\mathcal{V}_{\psi} f (x_0 \hskip-.03in + \hskip-.03in \frac{n}{2B},0)} \ne \mathcal{V}_{\psi} f (x_0 \hskip-.03in + \hskip-.03in \frac{n}{2B},0) \overline{\mathcal{V}_{\psi} f (x_0 \hskip-.03in + \hskip-.03in \frac{n+1}{2B},0)} ,  n\in \Z.
	\end{align}
	Then $f$ can be determined, up to a unimodular constant and conjugation, from magnitudes  $ |\mathcal{V}_{\psi} f (x_0+\frac{n}{2B},0)|$, $|\mathcal{V}_{\psi(\cdot-\frac{1}{2B})-\psi(\cdot)} f (x_0+\frac{n}{2B},0)|$ and $|\mathcal{V}_{\psi(\cdot-\frac{1}{B})-\psi(\cdot)} f (x_0+\frac{n}{2B},0)|, n\in \Z$.
\end{theorem}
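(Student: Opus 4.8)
The plan is to reduce the STFT-in-time measurements to ordinary structured phaseless samples of a band-limited function and then invoke Theorem \ref{PW.adj.thm2}, in the same spirit as the proof of Theorem \ref{thm4}. First I would record the convolutional identities: writing $\psi^\#(x)=\overline{\psi(-x)}$ and $F:=f\ast\psi^\#$, one has $\mathcal{V}_\psi f(x,0)=F(x)$ and, for any shift $c>0$, $\mathcal{V}_{\psi(\cdot-c)-\psi(\cdot)}f(x,0)=F(x+c)-F(x)$. Taking $c=\frac{1}{2B}$ and $c=\frac1B$, the three families of prescribed magnitudes become exactly $|F(x_0+\frac{n}{2B})|$, $|F(x_0+\frac{n+1}{2B})-F(x_0+\frac{n}{2B})|$ and $|F(x_0+\frac{n+2}{2B})-F(x_0+\frac{n}{2B})|$ for $n\in\Z$. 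By the convolution theorem, $\mathcal{F}F=(\mathcal{F}f)(\mathcal{F}\psi^\#)$ is a product of two $L^2$ functions, hence lies in $L^1$ and is supported in $[-B,B]$, so $F\in PW_B^1\subset PW_B^2$; moreover condition \eqref{eq50} is literally condition \eqref{eq97} written for $F$ with the same $x_0$.

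Next I would apply Theorem \ref{PW.adj.thm2} to $F$. Concretely, if $g\in PW_B^2$ produces the same STFT magnitudes as $f$, then $G:=g\ast\psi^\#\in PW_B^2$ carries the same phaseless samples as $F$, so Theorem \ref{PW.adj.thm2} forces $F=e^{i\alpha}G$ or $F=e^{i\alpha}\overline{G}$ for some $\alpha\in\R$. It then remains to transfer this back to $f$, which is where the realness of $\psi$ enters: since $\psi$ is real-valued, $\mathcal{F}\psi^\#(w)=\mathcal{F}\psi(-w)=\overline{\mathcal{F}\psi(w)}$, which is nonzero for a.e. $w\in(-B,B)$ by hypothesis, and $\overline{g\ast\psi^\#}=\overline{g}\ast\psi^\#$. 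In the first case $(\mathcal{F}f)(\mathcal{F}\psi^\#)=e^{i\alpha}(\mathcal{F}g)(\mathcal{F}\psi^\#)$; cancelling $\mathcal{F}\psi^\#$ a.e.\ on $(-B,B)$ and using that $\mathcal{F}f,\mathcal{F}g$ vanish outside $[-B,B]$ gives $\mathcal{F}f=e^{i\alpha}\mathcal{F}g$ a.e., hence $f=e^{i\alpha}g$. In the second case $F=e^{i\alpha}(\overline{g}\ast\psi^\#)$, and the same cancellation yields $f=e^{i\alpha}\overline{g}$. One can equally view $PW_B^2=V^2({\rm sinc})$ and invoke Theorem \ref{cpr.adj.thm}, which additionally recovers $F$ via Algorithm \ref{alg1} and then $f$ by dividing Fourier transforms by $\mathcal{F}\psi^\#$ on $(-B,B)$, yielding the announced reconstruction.

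I expect the only delicate points to be bookkeeping: correctly matching the window translates $\psi(\cdot-c)$ with the forward shifts $F(x+c)$, so that the two ``relative'' STFT measurements become first- and second-order neighbor differences of $F$ at step $\frac{1}{2B}$; and carefully tracking the conjugation through the Fourier transform, where the hypothesis that $\psi$ is real is used both in $\mathcal{F}\psi^\#(w)=\overline{\mathcal{F}\psi(w)}$ and in $\overline{g\ast\psi^\#}=\overline{g}\ast\psi^\#$. No analytic machinery beyond Theorem \ref{PW.adj.thm2} is required, and $x_0$ serves no purpose other than to supply the non-collinearity condition \eqref{eq50}.
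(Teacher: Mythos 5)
Your proposal is correct and follows essentially the same route as the paper: reduce the time-domain STFT magnitudes to the structured phaseless samples of $F=f\ast\psi^\#\in PW_B^2$, observe that \eqref{eq50} is exactly \eqref{eq97} for $F$, invoke Theorem \ref{PW.adj.thm2}, and then cancel $\mathcal{F}\psi^\#$ (nonzero a.e.\ on $(-B,B)$, with $\psi$ real so that $\overline{g\ast\psi^\#}=\overline{g}\ast\psi^\#$) as in the proof of Theorem \ref{thm4}. Your explicit tracking of the conjugation through the Fourier transform is, if anything, slightly more detailed than the paper's appeal to ``the arguments in the proof of Theorem \ref{thm4}.''
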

\begin{proof}
	Note that $\mathcal{V}_{\psi} f(x,0) = f\ast \psi^\# (x)$ and $\mathcal{V}_{\psi(\cdot-c)-\psi(\cdot)} f(x,0) = f\ast \psi^\# (x+c) - f\ast \psi^\# (x)$ for any $x,c\in \R$.
	Let $g\in PW_B^2$ satisfy
	\begin{align*}
		|f\ast \psi^\# (x_0+\frac{n}{2B})| = |g\ast \psi^\# (x_0+\frac{n}{2B})|,
	\end{align*}
	\begin{align*}
		|f\ast \psi^\# (x_0+\frac{n+1}{2B}) - f\ast \psi^\# (x_0+\frac{n}{2B})| = |g\ast \psi^\# (x_0+\frac{n+1}{2B}) - g\ast \psi^\# (x_0+\frac{n}{2B})|
	\end{align*}
	and
	\begin{align*}
		|f\ast \psi^\# (x_0+\frac{n+2}{2B}) - f\ast \psi^\# (x_0+\frac{n}{2B})| = |g\ast \psi^\# (x_0+\frac{n+2}{2B}) - g\ast \psi^\# (x_0+\frac{n}{2B})|
	\end{align*}
	for all $n\in \Z$.
	By (\ref{eq50}), we have
	\begin{align*}
		f\ast \psi^\# (x_0 \hskip-.03in + \hskip-.03in \frac{n+1}{2B}) \overline{f\ast \psi^\# (x_0 \hskip-.03in + \hskip-.03in \frac{n}{2B})} \hskip-.03in \ne \hskip-.03in f\ast \psi^\# (x_0 \hskip-.03in + \hskip-.03in \frac{n}{2B}) \overline{f\ast \psi^\# (x_0 \hskip-.03in + \hskip-.03in \frac{n+1}{2B})} , n\in \Z.
	\end{align*}
	By the proof of Theorem \ref{thm4}, we have $f\ast \psi^\#, g\ast \psi^\# \in PW_B^2$, and then
	\begin{align*}
		f\ast \psi^\# = e^{i\alpha} g\ast \psi^\# \text{ or } f\ast \psi^\# = e^{i\alpha} \overline{g\ast \psi^\#}, \text{ for some } \alpha\in \R, 
	\end{align*}
	by Theorem \ref{PW.adj.thm2}. 
	Applying the arguments in the proof of Theorem \ref{thm4}, we obtain the conclusion.
\end{proof}

\begin{remark}
	We remark that Theorem \ref{thm6} implies a reconstruction method similar to Algorithm \ref{alg1}, using magnitudes $ |\mathcal{V}_{\psi} f (x_0+\frac{n}{2B},0)|$, $|\mathcal{V}_{\psi(\cdot-\frac{1}{2B})-\psi(\cdot)} f (x_0+\frac{n}{2B},0)|$ and $|\mathcal{V}_{\psi(\cdot-\frac{1}{B})-\psi(\cdot)} f (x_0+\frac{n}{2B},0)|, n\in \Z$.
\end{remark}

\begin{appendices}
	\section{}



	In Algorithm \ref{alg1}, we reconstruct the signal $f\in PW_{B}^2$ from the structured phaseless samples $|f(x_0+\frac{n}{2B})|$, $|f(x_0+\frac{n+1}{2B})-f(x_0+\frac{n}{2B})|$ and $|f(x_0+\frac{n+2}{2B})-f(x_0+\frac{n}{2B})|, n\in \Z$ for some $x_0\in \R$, up to a unimodular constant and conjugation. After we determine the samples $f(x_0+\frac{n}{2B})$, up to a unimodular and conjugation, we use the following formula to determine the signal, cf. WSK sampling theorem.

	\begin{lemma}\label{lem.WSK}
		Let $B>0$, the Paley-Wiener space $PW_B^2$ be defined as in (\ref{PW.def}). Assume $f\in PW_B^2$ and $x_0\in \R$, then
		\begin{align*}
			f(x) = \sum_{n\in \Z} f(x_0 + \frac{n}{2B}) \text{sinc} (2B(x-x_0)-n), x\in \R,
		\end{align*}
		where the series converges to $f$ in $L^2(\R)$.
	\end{lemma}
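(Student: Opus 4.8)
The plan is to transfer the problem to the Fourier side, where $PW_B^2$ is identified with $L^2([-B,B])$ and the sampling formula reduces to an ordinary Fourier series expansion. Throughout I use the normalization $\mathcal{F}f(\omega)=\int_{\mathbb{R}}f(t)e^{-2\pi it\omega}\,dt$, so that $\mathcal{F}$ is unitary on $L^2(\mathbb{R})$ and $\operatorname{sinc}(u)=\sin(\pi u)/(\pi u)$.

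First I would record the basic facts. Since $f\in PW_B^2$, the Fourier transform $\widehat f$ belongs to $L^2([-B,B])$, hence to $L^1([-B,B])$ by Cauchy--Schwarz; consequently $f$ coincides a.e.\ with a bounded continuous function, and all the point evaluations $f(x_0+\tfrac{n}{2B})$ are well defined once $f$ is identified with this continuous representative. Moreover, $\mathcal{F}^{-1}$ restricts to a unitary map from $\{h\in L^2(\mathbb{R}):\operatorname{supp}h\subseteq[-B,B]\}$ (isometric to $L^2([-B,B])$) onto $PW_B^2$.

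Next I would expand in the orthonormal basis $\{e_n\}_{n\in\mathbb{Z}}$ of $L^2([-B,B])$ given by $e_n(\omega)=\tfrac{1}{\sqrt{2B}}e^{-\pi i n\omega/B}$. Writing $F(\omega)=\widehat f(\omega)e^{2\pi ix_0\omega}\in L^2([-B,B])$ and using the inversion formula together with $\operatorname{supp}\widehat f\subseteq[-B,B]$,
\begin{equation*}
\langle F,e_n\rangle=\frac{1}{\sqrt{2B}}\int_{-B}^{B}\widehat f(\omega)\,e^{2\pi i(x_0+n/(2B))\omega}\,d\omega=\frac{1}{\sqrt{2B}}\,f\!\left(x_0+\frac{n}{2B}\right).
\end{equation*}
By Parseval's identity this already shows $\big(f(x_0+\tfrac{n}{2B})\big)_{n}\in\ell^2$, so the series on the right-hand side of the claimed identity is a well-defined element of $L^2(\mathbb{R})$ (the functions $x\mapsto\operatorname{sinc}(2B(x-x_0)-n)$ form, up to the constant $\sqrt{2B}$, an orthonormal system in $L^2(\mathbb{R})$, being the image of $\{e_n\}$ under $\mathcal{F}^{-1}$ composed with a modulation). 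Expanding $F=\sum_n\langle F,e_n\rangle e_n$ in $L^2([-B,B])$ and multiplying by $e^{-2\pi ix_0\omega}\mathbf{1}_{[-B,B]}(\omega)$ yields, with convergence in $L^2(\mathbb{R})$,
\begin{equation*}
\widehat f(\omega)=\frac{1}{2B}\sum_{n\in\mathbb{Z}}f\!\left(x_0+\frac{n}{2B}\right)e^{-2\pi i\omega(x_0+n/(2B))}\,\mathbf{1}_{[-B,B]}(\omega).
\end{equation*}

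Finally I would apply $\mathcal{F}^{-1}$, which is unitary and hence preserves $L^2$-convergence, and compute the inverse transform of each summand:
\begin{equation*}
\mathcal{F}^{-1}\!\left(e^{-2\pi i\omega(x_0+n/(2B))}\,\mathbf{1}_{[-B,B]}\right)(x)=\int_{-B}^{B}e^{2\pi i\omega(x-x_0-n/(2B))}\,d\omega=2B\,\operatorname{sinc}\!\left(2B(x-x_0)-n\right),
\end{equation*}
so the factor $2B$ cancels the prefactor $\tfrac{1}{2B}$ and the asserted expansion follows, with convergence in $L^2(\mathbb{R})$. I do not anticipate a serious obstacle: the only delicate points are keeping track of the Fourier-transform and $\operatorname{sinc}$ normalizations so that all constants cancel exactly, the square-summability of the sample sequence (handled by Parseval for $\{e_n\}$), and the legitimacy of point evaluations (handled by $\widehat f\in L^1$). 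If one additionally wants pointwise convergence of the series, it follows from Cauchy--Schwarz together with $\sum_n|\operatorname{sinc}(2B(x-x_0)-n)|^2<\infty$, but this is not needed for the stated $L^2$ conclusion.
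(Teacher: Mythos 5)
Your proposal is correct and follows essentially the same route as the paper's proof: expand $\widehat f$ in the exponential orthonormal basis of $L^2([-B,B])$, identify the Fourier coefficients with the samples $f(x_0+\tfrac{n}{2B})$ via the inversion formula, and apply $\mathcal{F}^{-1}$ term by term to produce the shifted sinc functions. Your version is slightly more careful about the $\ell^2$-summability of the samples and the justification of point evaluations, but the core computation is identical.
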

	\begin{proof}
		One may easily verify that $\mathcal{F}(sinc)= \textbf{1}|_{[-\frac{1}{2},\frac{1}{2}]}$ and $\mathcal{F}(sinc(2B\cdot))= \frac{1}{2B}\textbf{1}|_{[-B,B]}$.
		
		We can obtain that $\{\frac{1}{\sqrt{2B}} e^{2\pi iw (-x_0+ \frac{n}{2B})}\}_n$ is a orthonormal basis in $L^2([-B,B])$. In fact, for every $n,m\in \Z$
		\begin{align*}
			\int_{-B}^{B} e^{\frac{2\pi i(n-m)w}{2B}} dw {}& = \mathcal{F}^{-1} (\text{1}|_{[-B,B]}) (\frac{n-m}{2B}) = 2B \text{sinc} (2B \frac{n-m}{2B})\\
			{}& = 2B \text{sinc}(n-m) = 2B\delta_{m,n}.
		\end{align*}
		We have
		\begin{align*}
			\hat{f}(w) = \sum_{n\in \Z} a_n e^{2\pi iw (-x_0+\frac{n}{2B})},
		\end{align*}
		where
		\begin{align*}
			a_n = \frac{1}{2B} \int_{-B}^{B} \hat{f}(w) e^{- 2\pi iw (-x_0+\frac{n}{2B})} dw = \frac{1}{2B} \int_\R \hat{f}(w) e^{- 2\pi iw (-x_0+\frac{n}{2B})} dw = \frac{1}{2B} f(x_0 - \frac{n}{2B}),
		\end{align*}
		which implies
		\begin{align*}
			\hat{f}(w) = \frac{1}{2B} \sum_{n\in \Z} f(x_0 - \frac{n}{2B}) e^{2\pi iw (-x_0+\frac{n}{2B})} = \frac{1}{2B} \sum_{n\in \Z} f(x_0 + \frac{n}{2B}) e^{2\pi iw (-x_0-\frac{n}{2B})}.
		\end{align*}
		Then
		\begin{align*}
			f(x) {}& = \int_\R \hat{f} (w) e^{2\pi ixw} dw = \int_{-B}^B \hat{f} (w) e^{2\pi ixw} dw \\
			{}& = \frac{1}{2B} \int_{-B}^B \sum_{n\in \Z} f(x_0 + \frac{n}{2B}) e^{2\pi iw (-x_0-\frac{n}{2B})} e^{2\pi ixw} dw \\
			{}& = \frac{1}{2B} \sum_{n\in \Z} f(x_0 + \frac{n}{2B}) \mathcal{F}^{-1} (\textbf{1}|_{[-B,B]}) (x-x_0- \frac{n}{2B}) \\
			{}& = \sum_{n\in \Z} f(x_0 + \frac{n}{2B})  \text{sinc} (2B(x-x_0)-n).
		\end{align*}
	\end{proof}

\end{appendices}


\begin{thebibliography}{99}
	
	
	\bibitem{alaifari2021uniqueness}
	R. Alaifari, and M. Wellershoff. Uniqueness of STFT phase retrieval for bandlimited functions. \emph{Appl. Comput. Harmon. Anal.}, 50:34--48, 2021.
	
	
	\bibitem{aldroubi2000beurling}
	A. Aldroubi, and K. Gr{\"o}chenig. Beurling-landau-type theorems for non-uniform sampling in shift invariant spline spaces. \emph{J. Fourier Anal. Appl.}, 6:93--103, 2000.
	
	\bibitem{aldroubi2001nonuniform}
	A. Aldroubi, and K. Gr{\"o}chenig. Nonuniform sampling and reconstruction in shift-invariant spaces. \emph{SIAM Rev.}, 43(4):585--620, 2001.
	
	
	\bibitem{aldroubi2005convolution}
	A. Aldroubi, Q. Sun, and W.-S. Tang. Convolution, average sampling, and a calderon resolution of the identity for shift-invariant spaces. \emph{J. Fourier Anal. Appl.}, 11:215--244, 2005.
	
	
	
	\bibitem{balan2009painless}
	R. Balan, B. G. Bodmann, P. G. Casazza, and D. Edidin. Painless reconstruction from magnitudes of frame coefficients. \emph{J. Fourier Anal. Appl.}, 15:488--501, 2009.
	
	\bibitem{balan2006signal}
	R. Balan, P. Casazza, and D. Edidin. On signal reconstruction without phase. \emph{Appl. Comput. Harmon. Anal.}, 20(3):345--356, 2006.
	
	\bibitem{bandeira2014saving}
	A. S. Bandeira, J. Cahill, D. G. Mixon, and A. A. Nelson. Saving phase: Injectivity and stability for phase retrieval. \emph{Appl. Comput. Harmon. Anal.}, 37(1):106--125, 2014.
	
	\bibitem{benedetto1992irregular}
	J. J. Benedetto. Irregular sampling and frames. In:  \emph{Wavelets: A Tutorial in Theory and Applications}, Academic Press, San Diego, CA, p. 445--507, 1992.
	
	\bibitem{benedetto2001modern}
	J. J. Benedetto, and P. J. S. G. Ferreira. \emph{Modern Sampling Theory: Mathematics and Applications}. Birkh{\"a}user, Boston, MA, 2001.
	

	
	\bibitem{bunk2007diffractive}
	O. Bunk, A. Diaz, F. Pfeiffer, C. David, B. Schmitt, D. K. Satapathy, and J. F. van der Veen. Diffractive imaging for periodic samples: retrieving one-dimensional concentration profiles across microfluidic channels. \emph{Acta Crystallogr. A}, 63:306--314, 2007.
	
	\bibitem{cahill2016phase}
	J. Cahill, P. G. Casazza, and I. Daubechies. Phase retrieval in infinite-dimensional Hilbert spaces. \emph{Trans. Amer. Math. Soc. Ser. B}, 3:63--76, 2016.
	
	
	\bibitem{candes2015phase}
	E. J. Cand{\`e}s, Y. C. Eldar, T. Strohmer, and V. Voroninski. Phase retrieval via matrix completion. \emph{SIAM Rev.}, 57(2):225--251, 2015.
	
	\bibitem{candes2013phaselift}
	E. J. Cand{\`e}s, T. Strohmer, and V. Voroninski. Phaselift: Exact and stable signal recovery from magnitude measurements via convex programming. \emph{Commun. Pure Appl. Math.}, 66(8):1241--1274, 2013.
	
	\bibitem{chen2024conjugate}
	Y. Chen, and C. Cheng. Conjugate phase retrieval in shift-invariant spaces generated by a Gaussian. arXiv preprint arXiv:2412.03807, 2024.
	
	\bibitem{chen2022phase}
	Y. Chen, C. Cheng, and Q. Sun. Phase retrieval of complex and vector-valued functions. \emph{J. Funct. Anal.}, 283(7):109593, 2022.
	
	\bibitem{chen2020phase}
	Y. Chen, C. Cheng, Q. Sun, and H. Wang. Phase retrieval of real-valued signals in a shift-invariant space. \emph{Appl. Comput. Harmon. Anal.}, 49(1):56--73, 2020.
	
	
	\bibitem{chen2023conjugate}
	Y. Chen, and Y. Wang. Conjugate phase retrieval in a complex shift-invariant space. arXiv preprint arXiv:2304.06206, 2023.
	
	\bibitem{cheng2021stable}
	C. Cheng, I. Daubechies, N. Dym, and J. Lu. Stable phase retrieval from locally stable and conditionally connected measurements. \emph{Appl. Comput. Harmon. Anal.}, 55:440--465, 2021.
	
	\bibitem{cheng2019phaseless}
	C. Cheng, J. Jiang, and Q. Sun. Phaseless sampling and reconstruction of real-valued signals in shift-invariant spaces. \emph{J. Fourier Anal. Appl.}, 25:1361--1394, 2019.
	
	\bibitem{cheng2021stable2}
	C. Cheng, and Q. Sun. Stable phaseless sampling and reconstruction of real-valued signals with finite rate of innovation. \emph{Acta Appl. Math.}, 171: 2021, Article No. 3.
	
	\bibitem{dainty1987phase}
	J. C. Dainty, and J. R. Fienup. Phase retrieval and image reconstruction for astronomy. In: \emph{Image Recovery: Theory and Application}, Academic Press, Orlando, FL, p. 231---275, 1987.
	
	\bibitem{boor1994structure}
	C. de Boor, R. A. DeVore, and A. Ron. The structure of finitely generated shift-invariant spaces in $L_2 (\R^d$). \emph{J. Funct. Anal.}, 119(1):37--78, 1994.
		
	
	
	\bibitem{evans2020conjugate}
	L. Evans, and C.-K. Lai. Conjugate phase retrieval on $\C^M$ by real vectors. \emph{Linear Algebra Appl.}, 587:45--69, 2020.
	
	
	
	
	
	
	\bibitem{grochenig2001foundations}
	K. Gr{\"o}chenig. \emph{Foundations of Time-Frequency Analysis}. Birkh{\"a}user, Boston, MA, 2001.
	
	\bibitem{grochenig2020phase}
	K. Gr{\"o}chenig. Phase-retrieval in shift-invariant spaces with Gaussian generator. \emph{J. Fourier Anal. Appl.}, 26: 2020, Article No. 52.
	
	\bibitem{grochenig2018sampling}
	K. Gr{\"o}chenig, J. L. Romero, and J. St{\"o}ckler. Sampling theorems for shift-invariant spaces, Gabor frames, and totally positive functions. \emph{Invent. Math.}, 211:1119--1148, 2018.
	
	
	
	\bibitem{harrison1993phase}
	R. W. Harrison. Phase problem in crystallography. \emph{J. Opt. Soc. Am. A}, 10(5):1046--1055, 1993.
	
	\bibitem{higgins1996sampling}
	J. R. Higgins. \emph{Sampling Theory in Fourier and Signal Analysis: Foundations}. Oxford University Press, New York, NY, 1996.
	
	
	\bibitem{jaming2014uniqueness}
	P. Jaming. Uniqueness results in an extension of Pauli's phase retrieval problem. \emph{Appl. Comput. Harmon. Anal.}, 37(3):413--441, 2014.
	
	\bibitem{lai2021conjugate}
	C.-K. Lai, F. Littmann, and E. S. Weber. Conjugate phase retrieval in Paley-Wiener space. \emph{J. Fourier Anal. Appl.}, 27: 2021, Article No. 89.
	
	
	
	\bibitem{mc2004phase}
	J. N. McDonald. Phase retrieval and magnitude retrieval of entire functions. \emph{J. Fourier Anal. Appl.}, 10(3):259--267, 2004.
	
	\bibitem{millane1990phase}
	R. P. Millane. Phase retrieval in crystallography and optics. \emph{J. Opt. Soc. Am. A}, 7(3):394--411, 1990.
	
	\bibitem{pohl2014phaseless}
	V. Pohl, F. Yang, and H. Boche. Phaseless signal recovery in infinite dimensional spaces using structured modulations. \emph{J. Fourier Anal. Appl.}, 20:1212--1233, 2014.
	
	\bibitem{romero2021sign}
	J. L. Romero. Sign retrieval in shift-invariant spaces with totally positive generator. \emph{J. Fourier Anal. Appl.}, 27: 2021, Article No. 27.
	
	\bibitem{shenoy2016exact}
	B. A. Shenoy, S. Mulleti, and C. S. Seelamantula. Exact phase retrieval in principal shift-invariant spaces. \emph{IEEE Trans. Signal Process.}, 64(2):406--416, 2016.
	
	\bibitem{thakur2011reconstruction}
	G. Thakur. Reconstruction of bandlimited functions from unsigned samples. \emph{J. Fourier Anal. Appl.}, 17:720--732, 2011.
	
	\bibitem{unser2000sampling}
	M. Unser. Sampling-50 years after shannon. \emph{Proc. IEEE}, 88(4):569--587, 2000.
	
	\bibitem{walther1963question}
	A. Walther. The question of phase retrieval in optics. \emph{Opt. Acta}, 10(1):41--49, 1963.
	
	
	\bibitem{young2001introduction}
	R. M. Young. \emph{An Introduction to Nonharmonic Fourier Series, Revised First Edition}. Academic Press, San Diego, CA, 2001.
	
	
	
	
	
\end{thebibliography}
\end{document}